\renewcommand{\geq}{\geqslant}
\renewcommand{\leq}{\leqslant}
\DeclareMathOperator{\dens}{dens}
\newcommand{\Lip}{{\mathrm{Lip}}_0}
\newtheorem{theorem}{Theorem}[section]
\newtheorem{lemma}[theorem]{Lemma}
\newtheorem{claim}[theorem]{Claim}
\theoremstyle{definition}
\newtheorem{definition}[theorem]{Definition}
\newtheorem{example}[theorem]{Example}
\theoremstyle{remark}
\newtheorem{remark}[theorem]{Remark}
\numberwithin{equation}{section}
\def\fnote#1{\footnote}
\def\ignora#1{}
\def\n3#1{\left\vert  \! \left\vert \! \left\vert \, #1 \, \right\vert \!
  \right\vert \! \right\vert }
\newcommand{\iten}{\ensuremath{\widehat{\otimes}_\varepsilon}}
\begin{document}

\title{ $L$-orthogonality in Daugavet centers and narrow operators }

\author{ Abraham Rueda Zoca }\thanks{ The author was supported by MICINN (Spain) Grant PGC2018-093794-B-I00 (MCIU, AEI, FEDER, UE), by Junta de Andaluc\'ia Grant A-FQM-484-UGR18 and by Junta de Andaluc\'ia Grant FQM-0185}
\address{Universidad de Murcia, Departamento de Matem\'aticas, Campus de Espinardo 30100 Murcia, Spain} \email{ abrahamrueda@ugr.es}
\urladdr{\url{https://arzenglish.wordpress.com}}

\keywords{L-orthogonality; narrow operatos; Daugavet centers; Daugavet equation}

\subjclass[2010]{46B04; 46B20; 46B26}

\maketitle

\begin{abstract}
We study the presence of $L$-orthogonal elements in connection with Daugavet centers and narrow operators. We prove that, if $\dens(Y)\leq \omega_1$ and $G:X\longrightarrow Y$ is a Daugavet center, then $G(W)$ contains some $L$-orthogonal for every non-empty $w^*$-open subset of $B_{X^{**}}$. In the context of narrow operators, we show that if $X$ is separable and $T:X\longrightarrow Y$ is a narrow operator, then given $y\in B_X$ and any non-empty $w^*$-open subset $W$ of $B_{X^{**}}$ then $W$ contains some $L$-orthogonal $u$ so that $T^{**}(u)=T(y)$. In the particular case that $T^*(Y^*)$ is separable, we extend the previous result to $\dens(X)=\omega_1$. Finally, we prove that none of the previous results holds in larger density characters (in particular, a counterexample is shown for $\omega_2$ under continuum hypothesis).
\end{abstract}

\section{Introduction}

A Banach space $X$ is said to have the Daugavet property if every rank-one operator $T:X\longrightarrow X$ satisfies the equality
\begin{equation}\label{ecuadauga}
\Vert T+I\Vert=1+\Vert T\Vert,
\end{equation}
where $I$ denotes the identity operator. The previous equality is known as \textit{Daugavet equation} because I. Daugavet proved in \cite{dau} that every compact operator on $\mathcal C([0,1])$ satisfies (\ref{ecuadauga}). Since then, a lot of examples of Banach spaces enjoying the Daugavet property have appeared such as $\mathcal C(K)$ for a compact Hausdorff and perfect topological space $K$, $L_1(\mu)$ and $L_\infty(\mu)$ for a non-atomic measure $\mu$ or the space of Lipschitz functions $\Lip(M)$ over a metrically convex space $M$ (see \cite{ikw,kssw,shv,werner} and the references therein for details). Moreover, in \cite{kssw} (respectively \cite{shv}) a characterisation of the Daugavet property in terms of the geometry of the slices (respectively non-empty weakly open subsets) of $B_X$ appeared. Namely, a Banach space $X$ has the Daugavet property if, and only if, given any $x\in S_X$, any non-empty weakly open subset $W$ of $B_X$ and any $\varepsilon>0$ there exists $y\in W$ such that $\Vert x+y\Vert>2-\varepsilon$. Despite the strong geometric spirit of this characterisation, it has been useful to extend the Daugavet equation from rank one operators to wider classes of operators such as operators which do not fix any copy of $\ell_1$ \cite[Theorem 3]{shv}. 

Very recently, the previous geometric characterisation of the Daugavet property was put further in the following sense: let $X$ be a Banach space with the Daugavet property. Is it true that, given any $w^*$-open subset $W$ of $B_{X^{**}}$, there exists $v\in W$ so that
\begin{equation}\label{eq:Lortogonal}\Vert x+v\Vert=1+\Vert x\Vert
\end{equation} 
holds for every $x\in X$?

The motivation for studying this question came from the paper \cite[Lemma 9.1]{gk}, where it is proved that, if $X$ is a separable Banach space, the existence of $u\in S_{X^{**}}$ satisfying \eqref{eq:Lortogonal} is equivalent to the fact that \textit{the norm of $X$ is octahedral} (see Remark \ref{remark:octanoempint} for a formal definition). Since it is well known that the Daugavet property implies octahedrality of the norm \cite[Lemma 2.8]{kssw}, the author proved in \cite[Theorem 3.2]{rueda} that, if $X$ is a separable Banach space with the Daugavet property, then the set of those $v\in S_{X^{**}}$ satisfying \eqref{eq:Lortogonal} (and called \textit{L-orthogonal elements} in \cite{loru}) is $w^*$-dense in $B_{X^{**}}$. Apart from being a natural extension of the Daugavet property, this abundance of $L$-orthogonal elements showed to be useful to study $L$-embedded Banach spaces with the Daugavet property \cite[Theorem 3.4]{rueda} and to study the Daugavet property in projective tensor products of an $L$-embedded Banach space \cite[Theorem 3.7]{rueda}. 

Motivated by the previous result and by the question whether \cite[Lemma 9.1]{gk} holds in non-separable cases, in \cite{loru} it was proved that the characterisation of Daugavet property in terms of abundance of elements satisfying \eqref{eq:Lortogonal} is characteristic of Banach spaces with small density character. Indeed, in \cite[Theorem 3.6]{loru} it is proved that if $X$ is a Banach space with the Daugavet property and $\dens(X)=\omega_1$ then there are a $w^*$-dense subset of $L$-orthogonal elements in $B_{X^{**}}$. However, there are Banach spaces $X$ with $\dens(X)=card(\mathcal P(\mathbb R))$ having the Daugavet property but for which there is no element $v\in S_{X^{**}}$ satisfying \eqref{eq:Lortogonal} (in particular, this proves that the above mentioned \cite[Theorem 3.6]{loru} is sharp under the continuum hypothesis) \cite[Example 3.8]{loru}.

From here two consequences are derived. On the one hand, as we have pointed out, the presence of $L$-orthogonality in connection with the Daugavet property is a phenomenon that does not happen in large density characters. However, for $\dens(X)\leq \omega_1$, there is a strong interplay between the Daugavet property and the abundance of elements $L$-orthogonal to $X$ which goes further from octahedrality because, to the best of our knownledge, the question whether or not the result \cite[Lemma 9.1]{gk} holds for Banach spaces of density character equal to $\omega_1$ remains open.

In view of this fact, in this paper we aim to study how deep the $L$-orthogonality is connected with the Daugavet property and the Daugavet equation. To do so, we will study its presence in connection with two important concepts coming from the Daugavet equation: the concept of Daugavet center and the concept of narrow operator. 

After introducing necessary notation and preliminary results in Section \ref{sect:notation}, in Section \ref{sect:daucen} we will make a study of the Daugavet centers. The main result is Theorem \ref{theo:daucenterome1}, where we prove that if $G:X\longrightarrow Y$ is a Daugavet center with $G(X)$ separable and $\dens(Y)\leq \omega_1$ then, given any non-empty $w^*$-open subset $W$ of $B_{X^{**}}$, there exists $u\in W$ so that $\Vert G^{**}(u)+y\Vert=1+\Vert y\Vert$ holds for every $y\in Y$. Based on \cite[Example 3.8]{loru}, we show in Example \ref{examp:countercenter} that the previous result does not hold if $\dens(Y)$ is larger.

In Section \ref{sect:narrop} we study the presence of $L$-orthogonality for narrow operators. Here we obtain two different results. In Theorem \ref{theo:sepasrLornarrow} we prove that if $X$ is a separable Banach space and $T:X\longrightarrow Y$ is a narrow operator, then given any $y\in B_X$ and any non-empty $w^*$-open subset $W$ containing $y$ there exists an element $u\in W$ with $T^{**}(u)=T^{**}(y)$ and such that $\Vert x+u\Vert=1+\Vert x\Vert$ holds for every $x\in X$. When $T^*(Y^*)$ is separable, we prove in Theorem \ref{theo:nosepLordualsep} the same result when $\dens(X)=\omega_1$. Again, we show in Example \ref{exam:counnarrow} that the previous result is no longer true when $\dens(X)$ is larger.

\section{Notation and preliminary results}\label{sect:notation}

We will consider only real Banach spaces. Given a Banach space $X$, we will denote the unit ball and the unit sphere of $X$ by $B_X$ and $S_X$ respectively. Moreover, given $x\in X$ and $r>0$, we will denote $B(x,r)=x+rB_X=\{y\in X:\Vert x-y\Vert\leq r\}$. We will also denote by $X^*$ the topological dual of $X$. Given a bounded subset $C$ of $X$, we will mean by a \textit{slice of $C$} a set of the following form
$$S(C,x^*,\alpha):=\{x\in C:x^*(x)>\sup x^*(C)-\alpha\}$$
where $x^*\in X^*$ and $\alpha>0$. If $X$ is a dual Banach space, the previous set will be called a \textit{$w^*$-slice} if $x^*$ belongs to the predual of $X$. Note that finite intersections of slices of $C$ (respectively of $w^*$-slices of $C$) form a basis for the inherited weak (respectively weak-star) topology of $C$. Throughout the text $\omega_1$ (respectively $\omega_2$) will denote the first uncountable ordinal (respectively the first ordinal whose cardinal is strictly bigger than the cardinality of $\omega_1$).

Let $Z$ be a subspace of a Banach space $X$.
We say that $Z$ is an \emph{almost isometric ideal} (ai-ideal) in $X$ if
$X$ is locally complemented in $Z$ by almost isometries.
This means that for each $\varepsilon>0$ and for each
finite-dimensional subspace $E\subseteq X$ there exists a linear
operator $T:E\to Z$ satisfying
\begin{enumerate}
\item\label{item:ai-1}
  $T(e)=e$ for each $e\in E\cap Z$, and
\item\label{item:ai-2}
  $(1-\varepsilon) \Vert e \Vert \leq \Vert T(e)\Vert\leq
  (1+\varepsilon) \Vert e \Vert$
  for each $e\in E$,
\end{enumerate}
i.e. $T$ is a $(1+\varepsilon)$ isometry fixing the elements of $E$.
If the $T$ satisfies only (\ref{item:ai-1}) and the right-hand side of
(\ref{item:ai-2}) we get the well-known
concept of $Z$ being an \emph{ideal} in $X$ \cite{gks}.

Note that the Principle of Local Reflexivity means that $X$ is an ai-ideal in $X^{**}$
for every Banach space $X$.

Throughout the text we will make use of the following two results, which we include here for the sake of completeness and for easy reference.

\begin{theorem}\label{theo:hbaioper}\cite[Theorem 1.4]{aln2}
Let $X$ be a Banach space and let $Z$ be an almost isometric ideal in $X$. Then there is a linear isometry $\varphi: Z^*\longrightarrow X^*$ such that
$$\varphi(z^*)(z)=z^*(z)$$
holds for every $z\in Z$ and $z^*\in Z^*$ and satisfying that, for every $\varepsilon>0$, every finite-dimensional subspace $E$ of $X$ and every finite-dimensional subspace $F$ of $Z^*$, we can find an operator $T:E\longrightarrow Z$ satisfying
\begin{enumerate}
\item $T(e)=e$ for every $e\in E\cap Z$, 
\item $(1-\varepsilon)\Vert e\Vert\leq \Vert T(e)\Vert\leq (1+\varepsilon)\Vert e\Vert$ holds for every $e\in E$, and;
\item $f(T(e))=\varphi(f)(e)$ holds for every $e\in E$ and every $f\in F$.
\end{enumerate}
\end{theorem}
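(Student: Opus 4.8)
The plan is to build the isometry $\varphi$ as a weak$^*$-limit of (linearisations of) the adjoints of the local almost-isometries supplied by the ai-ideal structure, and then to upgrade the resulting \emph{approximate} compatibility into the \emph{exact} identity (3) by a finite-dimensional correction. First I would fix an ultrafilter: let $\Lambda$ be the directed set of pairs $\alpha=(E_\alpha,\varepsilon_\alpha)$ with $E_\alpha\subseteq X$ finite-dimensional and $\varepsilon_\alpha>0$, ordered by $(E,\varepsilon)\leq(E',\varepsilon')$ iff $E\subseteq E'$ and $\varepsilon\geq\varepsilon'$, and let $\mathcal U$ be an ultrafilter on $\Lambda$ containing every order-tail. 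For each $\alpha$ the ai-ideal property gives an almost-isometry $T_\alpha\colon E_\alpha\to Z$ fixing $E_\alpha\cap Z$. Its adjoint $T_\alpha^*\colon Z^*\to E_\alpha^*$ is linear with $\Vert T_\alpha^*\Vert\leq 1+\varepsilon_\alpha$; composing with a near-isometric linear lifting $E_\alpha^*\to X^*$ of the (quotient) restriction map produces a linear operator $S_\alpha\colon Z^*\to X^*$ with $\Vert S_\alpha\Vert\leq 1+\varepsilon_\alpha$ and $S_\alpha(z^*)(e)=z^*(T_\alpha(e))$ for all $e\in E_\alpha$. I then define $\varphi(z^*):=w^*\text{-}\lim_{\mathcal U}S_\alpha(z^*)$, which exists because the net lies in a fixed multiple of the weak$^*$-compact ball $B_{X^*}$.

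Linearity of $\varphi$ is immediate, since each $S_\alpha$ is linear and the weak$^*$-limit is taken argument-wise. For the norm, $\Vert\varphi(z^*)\Vert\leq\limsup_{\mathcal U}(1+\varepsilon_\alpha)\Vert z^*\Vert=\Vert z^*\Vert$, because $\varepsilon_\alpha\to0$ along $\mathcal U$. For the extension property, fix $z\in Z$; for the tail of indices with $z\in E_\alpha$ we have $T_\alpha(z)=z$, hence $S_\alpha(z^*)(z)=z^*(z)$, and passing to the limit gives $\varphi(z^*)(z)=z^*(z)$. In particular $\Vert\varphi(z^*)\Vert\geq\Vert z^*\Vert$, so $\varphi$ is an isometry satisfying the first displayed identity. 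The very same computation yields $\varphi(f)(e)=\lim_{\mathcal U}f(T_\alpha(e))$ for every $f\in Z^*$ and $e\in X$ (using the tail with $e\in E_\alpha$), and this is the bridge to condition (3).

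Finally I would prove (3) by an approximate-then-correct argument. Given $\varepsilon>0$, $E$ and $F=\spann\{f_1,\dots,f_k\}\subseteq Z^*$, the bridge identity shows that, for any $\delta>0$, the set of $\alpha$ with $E\subseteq E_\alpha$, $\varepsilon_\alpha<\delta$ and $\abs{f_i(T_\alpha(e))-\varphi(f_i)(e)}<\delta$ for all $i$ and all $e$ in a fixed basis of $E$ belongs to $\mathcal U$, hence is non-empty; pick one such $\alpha$ and set $T_0:=T_\alpha\restricted E$. Writing $E=(E\cap Z)\oplus E_0$, on $E\cap Z$ the operator $T_0$ already satisfies (3) exactly (both sides equal $f_i$ evaluated at the point). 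On a basis $e_1,\dots,e_m$ of $E_0$ the residuals $c_{ij}:=\varphi(f_i)(e_j)-f_i(T_0(e_j))$ satisfy $\max_{i,j}\abs{c_{ij}}<\delta$, and since $f_1,\dots,f_k$ are linearly independent in $Z^*$ the evaluation map $w\mapsto(f_i(w))_{i=1}^k$ from $Z$ onto $\mathbb{R}^k$ admits, with a constant $C$ depending only on $F$, solutions $w_j\in Z$ of $f_i(w_j)=c_{ij}$ with $\Vert w_j\Vert\leq C\delta$.

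Setting $T(e_j):=T_0(e_j)+w_j$ on $E_0$, $T:=T_0$ on $E\cap Z$, and extending linearly yields $T\colon E\to Z$ fixing $E\cap Z$ and satisfying (3) exactly by linearity (both sides being linear in $e$); and since $\Vert T-T_0\Vert\leq C\delta$, choosing $\delta$ small enough keeps $T$ a $(1+\varepsilon)$-isometry, which is (2). The main obstacle is precisely this last step: turning the approximate compatibility furnished by the weak$^*$-limit into the exact identity (3) without destroying the almost-isometric estimates. This is what forces the explicit finite-dimensional correction, whose feasibility rests on the linear independence of $F$ in $Z^*$ and whose smallness rests on the fact that $\varphi(f)(e)$ is, by construction, the $\mathcal U$-limit of $f(T_\alpha(e))$.
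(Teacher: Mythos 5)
The paper offers no proof of this statement: it is quoted verbatim from \cite{aln2} for later reference, so your attempt must be measured against the original argument of Abrahamsen--Lima--Nygaard, which your ultrafilter construction essentially parallels. Within it there is, however, one step that fails as stated: the ``near-isometric linear lifting $E_\alpha^*\to X^*$ of the restriction map'' with $\Vert S_\alpha\Vert\leq 1+\varepsilon_\alpha$ does not exist in general. A \emph{linear} operator $L:E^*\to X^*$ with $L(g)\vert_E=g$ and $\Vert L\Vert\leq 1+\varepsilon$ yields, via $P:=L^*\vert_X:X\to E^{**}=E$, a projection of $X$ onto $E$ of norm at most $1+\varepsilon$ (and conversely $L:=P^*\vert_{E^*}$), so your claim amounts to saying that every finite-dimensional subspace of every Banach space is the range of a near-isometric projection. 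This is false: relative projection constants of finite-dimensional subspaces can be bounded away from $1$ (every two-dimensional space other than $\ell_\infty^2$ has absolute projection constant strictly bigger than $1$, and this is approached in suitable superspaces). Without the $S_\alpha$ your net need not be bounded, the weak$^*$-limit need not exist, and the estimate $\Vert\varphi(z^*)\Vert\leq\Vert z^*\Vert$ collapses.

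The good news is that your own ``bridge identity'' shows the lifting is a red herring, and the repair is one line. For each fixed $x\in X$ the tail $\{\alpha: x\in E_\alpha\}$ belongs to $\mathcal U$, and on that tail $\vert z^*(T_\alpha(x))\vert\leq(1+\varepsilon_\alpha)\Vert z^*\Vert\,\Vert x\Vert$; hence the scalar limits $\varphi(z^*)(x):=\lim_{\mathcal U}z^*(T_\alpha(x))$ exist and define a functional that is linear in $x$ and in $z^*$ (linearity is checked on tails where the relevant finitely many vectors all lie in $E_\alpha$), with $\Vert\varphi(z^*)\Vert\leq\Vert z^*\Vert$. Equivalently, one may take for each $\alpha$ an ordinary Hahn--Banach extension of $z^*\circ T_\alpha$ one functional at a time --- no linearity in $z^*$ is needed, since on the tail where $x\in E_\alpha$ the value at $x$ of \emph{any} such extension equals $z^*(T_\alpha(x))$, and linearity of the limit follows pointwise. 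With this substitution everything downstream of your construction goes through verbatim: the extension identity $\varphi(z^*)(z)=z^*(z)$ on tails, hence isometry; and your approximate-then-correct argument for (3) is sound --- the exactness on $E\cap Z$ is automatic, the biorthogonal system $w_j\in Z$ for a basis of $F$ solves the interpolation problem with $\Vert w_j\Vert\leq C\delta$, the perturbation satisfies $\Vert T-T_0\Vert=O(\delta)$ with constants depending only on $E$ and $F$ (all fixed before $\delta$ is chosen), so (1) and (2) survive. That correction step is precisely the point of the theorem beyond the classical ideal/Hahn--Banach extension operator statement of \cite{gks}, and you have it right.
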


Following the notation of \cite{abrahamsen}, to such an operator $\varphi$ we will refer as \textit{an almost-isometric Hahn-Banach extension operator}. Notice that if $\varphi:Z^*\longrightarrow X^*$ is an almost isometric Hahn-Banach extension operator, then $\varphi^*:X^{**}\longrightarrow Z^{**}$ is a norm-one projection (see e.g. \cite[Theorem 3.5]{kaltonlocomp}).

Another central result in our main theorems will be the following, coming from \cite[Theorem 1.5]{abrahamsen}

\begin{theorem}\label{theo:exteaiideales}
Let $X$ be a Banach space, let $Y$ be a separable subspace of $X$ and let $W\subseteq X^*$ be a separable subspace. Then there exists a separable almost isometric ideal $Z$ in $X$ containing $Y$ and an almost isometric Hahn-Banach extension operator $\varphi:Z^*\longrightarrow X^*$ such that $\varphi(Z^*)\supset W$.
\end{theorem}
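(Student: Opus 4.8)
The plan is to obtain $Z$ as the closure of an increasing union of separable subspaces built by a countable induction, carrying along a parallel construction in the dual to produce $\varphi$. Concretely, I would construct increasing chains of separable subspaces $Y=Y_0\subseteq Y_1\subseteq\cdots\subseteq X$ and $W=W_0\subseteq W_1\subseteq\cdots\subseteq X^*$ and set $Z:=\overline{\bigcup_n Y_n}$, so that $Z$ is separable and contains $Y$, while the closure $V:=\overline{\bigcup_n W_n}$ contains $W$. The two chains have to be grown together, because the defining property of an ai-ideal and the Hahn-Banach extension operator are linked through the compatibility relation $f(T(e))=\varphi(f)(e)$ appearing in Theorem~\ref{theo:hbaioper}.

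The engine of the induction is a local existence statement valid in an arbitrary Banach space: given a finite-dimensional $E\subseteq X$, a finite-dimensional $G\subseteq X^*$, a subspace to be fixed, and $\varepsilon>0$, one can find a $(1+\varepsilon)$-isometry $T:E\longrightarrow X$ that fixes the prescribed subspace and is compatible with the chosen functionals in $G$. This is exactly the kind of data furnished by the Principle of Local Reflexivity (recall that $X$ is itself an ai-ideal in $X^{**}$), and it is here that the two-sided estimate $(1-\varepsilon)\Vert e\Vert\leq\Vert T(e)\Vert$ --- the feature distinguishing ai-ideals from ordinary ideals --- enters; mere Hahn-Banach extension would only give the upper estimate. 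At stage $n$ I would fix countable dense subsets of $Y_n$ and $W_n$ and enumerate the countably many tasks $(E,G,\varepsilon)$ with $E$ spanned by finitely many of the chosen points of $Y_n$, $G$ a finite subset of the chosen points of $W_n$, and $\varepsilon\in\mathbb{Q}^+$. For each task I apply the local statement, throw the finite-dimensional image $T(E)$ into $Y_{n+1}$, and throw suitable norm-preserving extensions of the functionals into $W_{n+1}$; a diagonalisation keeps every $Y_{n+1}$ and $W_{n+1}$ separable.

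Passing to the limit, I would verify that $Z=\overline{\bigcup_n Y_n}$ is an ai-ideal: given an arbitrary finite-dimensional $E\subseteq X$ and $\varepsilon>0$, one approximates $E$ by subspaces captured at some stage and uses the near-isometries produced there, whose ranges already lie in $Z$, together with a small perturbation to correct the fixing condition on $E\cap Z$. The operator $\varphi:Z^*\longrightarrow X^*$ is then the isometric Hahn-Banach extension operator supplied by Theorem~\ref{theo:hbaioper} applied to the ai-ideal $Z$; the functionals inserted into the $W_n$ at each stage witness that every $w\in W$ is of the form $\varphi(\restr{w}{Z})$, so that $\varphi(Z^*)\supseteq W$.

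The main obstacle I anticipate is the simultaneous primal--dual bookkeeping: the near-isometries $T$ must have ranges inside $Z$ (a primal requirement) while remaining compatible with an ever-growing, separable collection of functionals whose extensions must ultimately lie in the range of a single isometry $\varphi$ (a dual requirement), and these two demands are coupled by $f(T(e))=\varphi(f)(e)$. Handling this forces the interleaving of the two chains and a careful diagonalisation so that nothing added at a later stage destroys a property secured earlier. An alternative, more streamlined route is to take a countable elementary submodel $M$ of a large enough fragment of set theory with $X,X^*,Y,W\in M$ and $Y\cup W\subseteq M$, and to set $Z=\overline{X\cap M}$: elementarity then reflects the local existence statements down to $M$ and produces both $Z$ and the range of $\varphi$ at once, at the cost of invoking the model-theoretic machinery.
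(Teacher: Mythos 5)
The first thing to note is that the paper does not prove this statement at all: it is imported verbatim from \cite[Theorem 1.5]{abrahamsen}, whose proof is indeed a Sims--Yost-type interleaved primal--dual construction broadly in the spirit of your sketch. Measured against that, your plan has the right architecture but two genuine gaps. The first concerns the verification that $Z$ is an ai-ideal. Your countable task list only contains subspaces $E$ spanned by finitely many points of $Y_n$, hence subspaces that end up \emph{inside} $Z$; but the ai-ideal property quantifies over \emph{all} finite-dimensional $E\subseteq X$, including those spanned by vectors far from $Z$, and for such $E$ your limit verification (``approximate $E$ by subspaces captured at some stage'') is simply unavailable: $X$ is nonseparable, the relevant $E$'s cannot be enumerated, and nothing in your construction ever inserts into $Z$ an almost isometric copy of an external $E$ over $E\cap Z$. (Note also that for $E\subseteq Y_n$ the identity already solves the task, so throwing $T(E)$ into $Y_{n+1}$ achieves nothing.) This is precisely where an extra idea is needed: either one inserts realizers of a countable \emph{dense} family of finite approximate norm-types over the dense set of the chain (the set of such types realized in $X$ over a fixed finite tuple lives in a separable metric space, which restores countability, and a perturbation lemma then fixes $E\cap Z$ exactly), or one uses your elementary-submodel alternative, where elementarity performs exactly this reflection of arbitrary finite tuples of $X$. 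As written, your induction produces a separable $Z\supseteq Y$ but does not prove it is an ai-ideal; the submodel route you mention in passing is the correct fix, not an optional luxury.

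The second gap is the range condition $\varphi(Z^*)\supseteq W$. At the end you take $\varphi$ from Theorem \ref{theo:hbaioper}, but that theorem supplies \emph{some} almost isometric Hahn--Banach extension operator with no control whatsoever on its range: $\varphi\left(\restr{w}{Z}\right)$ is merely some norm-preserving extension of $\restr{w}{Z}$, and there is no reason it should equal $w$, so the functionals you deposited in the $W_n$'s do not ``witness'' anything for a black-box $\varphi$. The inclusion cannot be retrofitted: $\varphi$ has to be constructed together with $Z$, typically as a pointwise-$w^*$ limit (along an ultrafilter or Banach limit over the stages) of the local data, with the compatibility $f(T(e))=f(e)$ enforced at every stage for $f$ in a countable dense subset of $W$, so that in the limit $\varphi\left(\restr{w}{Z}\right)=w$ for all $w\in W$. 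Relatedly, linearity and isometry of $\varphi$ need an argument: a separable pile of ``suitable norm-preserving extensions'' thrown into $W_{n+1}$ is a set of functionals, not a linear operator, and the passage from the dual chain to a single linear $\varphi$ is the crux of the Sims--Yost technique. You correctly identify the coupling $f(T(e))=\varphi(f)(e)$ as the main obstacle, but your final assembly outsources $\varphi$ to Theorem \ref{theo:hbaioper}, which contradicts that very observation.
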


According to \cite{gk}, given a Banach space $X$, the \textit{ball topology}, denoted by $b_X$, is defined as the coarsest topology on $X$ so that every closed ball is closed in $b_X$. As a consequence, a basis for the topology $b_X$ is formed by the sets of the following form
$$X\setminus\bigcup_{i=1}^n B(x_i,r_i),$$
where $x_1,\ldots, x_n$ are elements of $X$ and $r_1,\ldots, r_n$ are positive numbers.

Let us extract the following result from the proof of \cite[Lemma 9.1]{gk}, which will be used several times throughout the text.

\begin{lemma}\label{lema:gklemabxabier}
Let $X$ be a separable Banach space and let $\{O_n\}$ be a separable basis for the $b_X$-topology of $B_X$. If a sequence $\{x_n\}\subseteq S_X$ satisfies that
$$x_n\in\bigcap\limits_{k=1}^n O_k$$
holds for every $n\in\mathbb N$, then $\{x_n\}$ has a subsequence (say $\{x_{\sigma(n)}\}$) satisfying that, if $u$ is any $w^*$-cluster point of $\{x_{\sigma(n)}\}$, then
$$\Vert x+u\Vert=1+\Vert x\Vert$$
holds for every $n\in\mathbb N$.
\end{lemma}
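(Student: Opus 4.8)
The plan is to prove the (clearly intended) conclusion that every $w^*$-cluster point $u$ of the subsequence is $L$-orthogonal to $X$, namely that $\Vert x+u\Vert=1+\Vert x\Vert$ holds for every $x\in X$. The first thing I would record are the cheap reductions. Since $\{x_{\sigma(n)}\}\subseteq S_X\subseteq B_{X^{**}}$ and $B_{X^{**}}$ is $w^*$-compact, any $w^*$-cluster point $u$ lies in $B_{X^{**}}$, so $\Vert u\Vert\leq 1$ and the inequality $\Vert x+u\Vert\leq 1+\Vert x\Vert$ is automatic. Thus only the lower bound $\Vert x+u\Vert\geq 1+\Vert x\Vert$ needs to be proved, and because both $x\mapsto\Vert x+u\Vert$ and $x\mapsto 1+\Vert x\Vert$ are $1$-Lipschitz, it is enough to establish it for $x$ ranging over a fixed countable dense subset $\{y_k\}$ of $X$ (note that the case $y_k=0$ already forces $\Vert u\Vert=1$).

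Next I would extract the quantitative content of the ball-topology hypothesis. The assumption $x_n\in\bigcap_{k=1}^n O_k$ together with $\{O_n\}$ being a basis means that the tail of $\{x_n\}$ eventually enters every non-empty $b_X$-open subset of $B_X$. Applying this to the ball-complement $\{z\in B_X:\Vert z+y_k\Vert>\rho\}$, which is $b_X$-open and non-empty whenever $\rho<1+\Vert y_k\Vert$ (since $\sup_{z\in B_X}\Vert z+y_k\Vert=1+\Vert y_k\Vert$), yields $\Vert x_n+y_k\Vert>\rho$ for all large $n$. Letting $\rho\uparrow 1+\Vert y_k\Vert$ along rationals and diagonalizing gives $\Vert x_n+y_k\Vert\to 1+\Vert y_k\Vert$, and hence, by density, $\Vert x_n+y\Vert\to 1+\Vert y\Vert$ for every $y\in X$. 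So the sequence is asymptotically $L$-orthogonal to each element of $X$.

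The heart of the argument is to transfer this asymptotic $L$-orthogonality to the cluster point by producing genuine norming functionals. For each $k$ and each $n$ choose $g_{n,k}\in S_{X^*}$ with $g_{n,k}(x_n+y_k)=\Vert x_n+y_k\Vert$; since $\Vert x_n+y_k\Vert\to 1+\Vert y_k\Vert$, one has $g_{n,k}(y_k)\to\Vert y_k\Vert$ and $g_{n,k}(x_n)\to 1$. Using that $X$ is separable, so that $B_{X^*}$ is $w^*$-metrizable, the plan is to pass — by a diagonal argument over $k$ — to a single subsequence $\{x_{\sigma(n)}\}$ and to $w^*$-limits $f_k$ of the $g_{\sigma(n),k}$, with $f_k(y_k)=\Vert y_k\Vert$, arranged so that $f_k(x_{\sigma(n)})\to 1$ as $n\to\infty$ for each fixed $k$. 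Granting this, any $w^*$-cluster point $u$ of $\{x_{\sigma(n)}\}$ satisfies $u(f_k)=\lim_n f_k(x_{\sigma(n)})=1$, because a limit taken along the whole subsequence is shared by every subnet; consequently $\Vert u+y_k\Vert\geq (u+y_k)(f_k)=u(f_k)+f_k(y_k)=1+\Vert y_k\Vert$, which is exactly the required lower bound, and the conclusion then follows by density.

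The main obstacle is precisely the last extraction: ensuring that a \emph{fixed} functional $f_k$ asymptotically norms the \emph{entire} tail of the subsequence. This is the familiar clash between the two topologies. The norm is only $w^*$-lower semicontinuous, so along a convergent subnet $x_{\sigma(n_\alpha)}\to u$ one controls $\Vert u+y\Vert$ only from above; a priori a $w^*$-cluster point of the norm-one, almost-orthogonal vectors could satisfy $\Vert u+y\Vert<1+\Vert y\Vert$, i.e. could slip into the $w^*$-closed ball $\overline B_{X^{**}}(-y_k,\rho)$ even though each $x_{\sigma(n)}$ lies outside it. Correspondingly, transferring the norming information is a genuine two-index problem, since $(f_k-g_{\sigma(n),k})(x_{\sigma(n)})$ need not tend to $0$ as both indices move. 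It is exactly here that the countable $b_X$-basis, and not merely the relation $\Vert x_n+y\Vert\to 1+\Vert y\Vert$, must be used: the coherent ball-avoidance along a single sequence is what allows the subsequence to be chosen so that the limiting functionals do asymptotically norm the whole tail. Isolating and justifying this coherent choice — the step that genuinely consumes the separability hypothesis and the structure of the ball topology — is where I expect the real work of the proof to lie.
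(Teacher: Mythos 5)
Your reductions and your second paragraph are correct (in fact, no diagonalization is needed there: since $\{O_n\}$ is a basis, every non-empty $b_X$-open set contains a non-empty $O_m$ and hence a tail of the sequence, so $\Vert x_n+y\Vert\to 1+\Vert y\Vert$ holds for the full sequence, for every fixed $y\in X$). But the proposal is not a proof: the step you introduce with ``Granting this'' is exactly the content of the lemma, and you acknowledge as much in your last paragraph without supplying the mechanism. Moreover, the specific route you sketch cannot work as stated: if $f_k$ is a $w^*$-cluster point of the individually norming functionals $g_{\sigma(n),k}$, then for each \emph{fixed} $m$ the value $f_k(x_{\sigma(m)})$ is a cluster value of $g_{\sigma(n),k}(x_{\sigma(m)})$ as $n\to\infty$, and the only norming information you possess, $g_{\sigma(n),k}(x_{\sigma(n)})\to 1$, is purely diagonal and evaporates in this limit. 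So the ``two-index problem'' you correctly diagnose is fatal to limits of the $g_{n,k}$ themselves; one must manufacture functionals that each norm \emph{many} terms of the sequence simultaneously.

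The known repair --- which is the argument inside the proof of \cite[Lemma 9.1]{gk} that the paper cites instead of reproving --- applies the asymptotic relation not to a fixed $y$ but to elements built from the sequence itself. Fix $y_k$ and $\varepsilon>0$, and choose inductively $n_1<n_2<\cdots$ so that $\Vert y_k+x_{n_1}+\cdots+x_{n_{j+1}}\Vert>\Vert y_k+x_{n_1}+\cdots+x_{n_j}\Vert+1-\varepsilon 2^{-(j+1)}$; this is possible because the tail of $\{x_n\}$ eventually leaves the ball $B\bigl(-(y_k+\sum_{i\leq j}x_{n_i}),\rho\bigr)$ for every $\rho<1+\Vert y_k+\sum_{i\leq j}x_{n_i}\Vert$. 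A norming functional $f_j\in S_{X^*}$ for the long sum $y_k+\sum_{i\leq j}x_{n_i}$ then satisfies $f_j(y_k)\geq\Vert y_k\Vert-\varepsilon$ and $f_j(x_{n_i})\geq 1-\varepsilon$ for \emph{all} $i\leq j$ (since the norm of the sum nearly equals the sum of the norms, no summand can be badly normed), so a $w^*$-cluster point $f$ of $(f_j)$ norms $y_k$ and the entire subsequence up to $\varepsilon$; any $w^*$-cluster point $u$ of $(x_{n_i})$ then gives $\Vert u+y_k\Vert\geq u(f)+f(y_k)\geq 1+\Vert y_k\Vert-2\varepsilon$. Diagonalizing over pairs $(y_k,1/m)$ yields the subsequence of the lemma. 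Note that this also corrects your closing speculation: no use of the ball topology beyond the relation $\lim_n\Vert x_n+y\Vert=1+\Vert y\Vert$ for every $y\in X$ is needed; the missing idea is solely to apply that relation with $y$ depending on the previously selected terms. (Incidentally, the paper offers no proof of this lemma at all --- it extracts the statement from \cite{gk} --- and the conclusion's ``for every $n\in\mathbb N$'' is a typo for ``for every $x\in X$'', as you assumed.)
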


\section{Daugavet centers}\label{sect:daucen}

Let us start with one central definition of the section.

\begin{definition}\label{defi:daugacenter}
Let $X$ and $Y$ be two Banach spaces and $G:X\longrightarrow Y$ be a bounded operator. We say that $G$ is a \textit{Daugavet center} if 
$$\Vert G+T\Vert=\Vert G\Vert+\Vert T\Vert$$
holds for every rank-one operator $T:X\longrightarrow Y$.
\end{definition}

Note that a Banach space $X$ has the Daugavet property if, and only if, the identity operator $I:X\longrightarrow X$ is a Daugavet center. Daugavet centers were introduced in \cite{bk} to generalise one known result about existence of equivalent renormings with the Daugavet property. Namely, it is proved that if $G:X\longrightarrow Y$ is a Daugavet center, $Y$ is a subspace of $E$ and $J:Y\longrightarrow E$ is the natural embedding operator, then $E$ admits and equivalent renorming so that $J\circ G:X\longrightarrow E$ is also a Daugavet center \cite[Theorem 1.3]{bk}. See \cite{bk,iva,santos2020} and references therein for background and examples of Daugavet centers.

Let us write a characterisation of Daugavet centers, coming from \cite{bk}.

\begin{lemma}\label{lemma:centrefindimsub}
Let $X$ and $Y$ be two Banach spaces and $G:X\longrightarrow Y$ be a Daugavet center. Then, given a finite-dimensional subspace $E$ of $Y$, a non-empty relatively weakly open subset $U$ of $B_X$ and $\varepsilon>0$, we can find another non-empty relatively weakly open subset $V\subseteq U$ such that
$$\Vert e+\lambda G(v)\Vert>(1-\varepsilon)(\Vert e\Vert+\vert \lambda \vert)$$
holds for every $ v\in V$, every $e\in E$ and every $\lambda\in\mathbb R$.
\end{lemma}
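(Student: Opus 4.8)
The plan is to deduce the statement from the basic characterisation of Daugavet centers established in \cite{bk}: assuming $\Vert G\Vert=1$ (rescaling $G$ and adjusting $\lambda$ accordingly), for every $y\in S_Y$, every non-empty relatively weakly open $W\subseteq B_X$ and every $\eta>0$ there is $v\in W$ with $\Vert G(v)+y\Vert>2-\eta$. The whole difficulty is to upgrade this one-vector inequality to a bound that is uniform over the finite-dimensional subspace $E$ and over all scalars $\lambda$ simultaneously, and to produce a genuine relatively weakly open set $V$ rather than a single point.

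First I would reduce to a compact family of test data. The desired inequality is invariant under the scaling $(e,\lambda)\mapsto (te,t\lambda)$, so it suffices to prove it for $(e,\lambda)$ in the compact set $K=\{(e,\lambda)\in E\times\mathbb{R}:\Vert e\Vert+\vert\lambda\vert=1\}$. Since $\Vert G(v)\Vert\le 1$, the map $(e,\lambda)\mapsto\Vert e+\lambda G(v)\Vert$ is $1$-Lipschitz on $E\times\mathbb{R}$ for the norm $\Vert e\Vert+\vert\lambda\vert$, uniformly in $v$; hence, fixing a finite $\delta$-net $(e_1,\lambda_1),\dots,(e_n,\lambda_n)$ of $K$ with $\delta<\varepsilon/3$, it is enough to find $V$ on which $\Vert e_i+\lambda_i G(v)\Vert>1-\varepsilon/3$ for every $i$.

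The key step is the single test point. Given $(e_i,\lambda_i)\in K$ with $e_i\neq0\neq\lambda_i$, I would set $y_i=\operatorname{sgn}(\lambda_i)e_i/\Vert e_i\Vert\in S_Y$ and apply the characterisation to obtain $v$ with $\Vert G(v)+y_i\Vert>2-\varepsilon/3$. A norming functional $z^*\in S_{Y^*}$ for $G(v)+y_i$ then satisfies $z^*(G(v))>1-\varepsilon/3$ and $z^*(y_i)>1-\varepsilon/3$; writing $e_i=\operatorname{sgn}(\lambda_i)\Vert e_i\Vert\,y_i$ and $\lambda_i=\operatorname{sgn}(\lambda_i)\vert\lambda_i\vert$ one gets $\vert z^*(e_i+\lambda_i G(v))\vert>(1-\varepsilon/3)(\Vert e_i\Vert+\vert\lambda_i\vert)=1-\varepsilon/3$, whence $\Vert e_i+\lambda_i G(v)\Vert>1-\varepsilon/3$. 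The degenerate net points $\lambda_i=0$ (trivially $\Vert e_i\Vert=1$) and $e_i=0$ (take any $y\in S_Y$, so $\Vert G(v)\Vert>1-\varepsilon/3$) are immediate.

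Finally, to turn these individual points into one common relatively weakly open set I would exploit convexity. Each $\phi_i(v):=\Vert e_i+\lambda_i G(v)\Vert$ is convex and norm-continuous, so its sublevel sets are convex and norm-closed, hence weakly closed, and $\{v:\phi_i(v)>1-\varepsilon/3\}$ is relatively weakly open in $B_X$. Proceeding inductively from $W_0=U$: at step $i$ the single-point argument, applied inside the non-empty relatively weakly open set $W_{i-1}$, produces $v_i$, and I put $W_i=W_{i-1}\cap\{v:\phi_i(v)>1-\varepsilon/3\}$, which is non-empty (it contains $v_i$) and relatively weakly open. Then $V:=W_n$ satisfies $\phi_i(v)>1-\varepsilon/3$ for all $i$ and all $v\in V$, and the $\delta$-net Lipschitz estimate gives $\Vert e+\lambda G(v)\Vert>1-2\varepsilon/3>(1-\varepsilon)(\Vert e\Vert+\vert\lambda\vert)$ first on $K$ and then, by homogeneity, for all $(e,\lambda)$. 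I expect this last paragraph to be the main obstacle: extracting a relatively weakly open witness rather than merely a point forces the convexity/weak-closedness observation together with the inductive nesting, because the characterisation of \cite{bk} only delivers individual points.
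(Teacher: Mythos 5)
Your proof is correct. Note that the paper itself contains no proof of this lemma --- it is quoted as a known result from \cite{bk} --- and your argument is essentially the standard one behind it: it is the Daugavet-center analogue of the corresponding finite-dimensional-subspace lemma for the Daugavet property (cf.\ \cite{kssw,ksw}), built exactly as you do from (i) homogeneity plus compactness of $\{(e,\lambda):\Vert e\Vert+\vert\lambda\vert=1\}$ to reduce to a finite $\delta$-net, (ii) the single-vector characterisation of Daugavet centers on weakly open sets together with a norming functional for $G(v)+y_i$, and (iii) Mazur's theorem, which makes $\{v\in B_X:\Vert e_i+\lambda_i G(v)\Vert>1-\varepsilon/3\}$ relatively weakly open so that the witnesses can be nested inductively into a single nonempty weakly open $V\subseteq U$ --- the step you rightly identified as the crux. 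One small caveat: your parenthetical ``rescaling $G$ and adjusting $\lambda$'' does not literally recover the statement when $\Vert G\Vert\neq 1$ (for $\Vert G\Vert<1$ the conclusion fails already at $e=0$, $\lambda=1$, since $\Vert G(v)\Vert\leq\Vert G\Vert$); the lemma tacitly assumes $\Vert G\Vert=1$, as in \cite{bk}, so this is an imprecision of the statement as quoted rather than a gap in your argument --- but it is worth stating the normalisation explicitly rather than suggesting it can be scaled away.
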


\begin{remark}\label{remark:octanoempint}
Recall that the norm of a Banach space $X$ is \textit{octahedral} if, given a finite-dimensional subspace $E$ of $X$ and $\varepsilon>0$, there exists $x\in S_X$ such that
$$\Vert e+\lambda x\Vert>(1-\varepsilon)(\Vert e\Vert+\vert\lambda\vert)$$
holds for every $e\in E$ and every $\lambda\in\mathbb R$. See \cite{god,gk} for background. Note that Lemma \ref{lemma:centrefindimsub} implies that if $G:X\longrightarrow Y$ is a Daugavet center then the norm of $Y$ is octahedral. Consequently, if $O_1, O_2$ are non-empty $b_Y$-open subsets of $B_Y$, then $O_1\cap O_2\neq\emptyset$ \cite[Lemma 9.1]{gk}. 
\end{remark}

With the previous lemma in mind let us prove the following preliminary lemma.

\begin{lemma}\label{lemma:daugacenterbxopen}
Let $X$ and $Y$ be Banach spaces, let $G:X\longrightarrow Y$ be a Daugavet center and $O$ be a non-empty $b_Y$-open subset of $B_Y$. Then, given any non-empty relatively weakly open subset $W$ of $B_X$ we get that
$$G(W)\cap O\neq\emptyset.$$
\end{lemma}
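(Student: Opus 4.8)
The plan is to reduce the problem to the case of a basic $b_Y$-open set and then to feed the span of its defining centers into Lemma \ref{lemma:centrefindimsub}. First I would unwind the ball topology: since $O$ is a nonempty $b_Y$-open subset of $B_Y$, I pick $z_0\in O$ and, from the description of the basis of $b_Y$, find $y_1,\dots,y_n\in Y$ and $r_1,\dots,r_n>0$ with
$$z_0\in B_Y\setminus\bigcup_{i=1}^n B(y_i,r_i)\subseteq O.$$
It then suffices to produce $w\in W$ with $G(w)\in B_Y$ and $\|G(w)-y_i\|>r_i$ for every $i$. The elementary but crucial observation is that the mere nonemptiness of this basic set forces, for each $i$,
$$r_i<\|z_0-y_i\|\leq\|z_0\|+\|y_i\|\leq 1+\|y_i\|,$$
so that $1+\|y_i\|$ strictly dominates $r_i$.

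Next I set $E=\spann\{y_1,\dots,y_n\}$, a finite-dimensional subspace of $Y$, and choose $\varepsilon>0$ small enough that $(1-\varepsilon)(1+\|y_i\|)\geq r_i$ for all $i$; this is possible by the previous inequality, since there are only finitely many indices. Applying Lemma \ref{lemma:centrefindimsub} to $E$, the weakly open set $W$ and this $\varepsilon$ yields a nonempty relatively weakly open $V\subseteq W$ with $\|e+\lambda G(v)\|>(1-\varepsilon)(\|e\|+|\lambda|)$ for all $v\in V$, $e\in E$ and $\lambda\in\R$. Fixing any $v\in V$ and taking $e=-y_i\in E$ together with $\lambda=1$ gives
$$\|G(v)-y_i\|>(1-\varepsilon)(1+\|y_i\|)\geq r_i,$$
hence $G(v)\notin B(y_i,r_i)$ for every $i$; moreover $\|G\|=1$ and $v\in B_X$ give $G(v)\in B_Y$. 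Therefore $G(v)\in O$ while $v\in V\subseteq W$, which is exactly what we wanted.

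I do not expect a serious obstacle, since all the analytic content is already encapsulated in Lemma \ref{lemma:centrefindimsub}. The one genuine idea is to extract the inequality $r_i<1+\|y_i\|$ from the nonemptiness of $O$, which is precisely what makes the octahedrality-type estimate strong enough to eject $G(v)$ from every ball simultaneously. The only point demanding a little care is the normalization $\|G\|=1$, which is needed to guarantee $G(v)\in B_Y$ and which is the standing assumption on Daugavet centers.
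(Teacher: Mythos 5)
Your proof is correct and follows essentially the same route as the paper's: reduce to a basic $b_Y$-open set, extract $r_i<1+\|y_i\|$ from nonemptiness, and apply Lemma \ref{lemma:centrefindimsub} to the span of the centers. In fact you are slightly more careful than the paper on two minor points — taking $e=-y_i$ so that the estimate directly bounds $\|G(v)-y_i\|$, and noting that $\|G\|=1$ (the normalization implicit in Lemma \ref{lemma:centrefindimsub} and in \cite{bk}) is what places $G(v)$ in $B_Y$.
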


\begin{proof}
Up to take a smaller $b_Y$ open subset, we can assume that $O:=\bigcap\limits_{i=1}^k B_X\setminus B(x_i,r_i)$. Notice that, since $O$ is non-empty then $r_i<1+\Vert x_i\Vert$ holds for every $1\leq i\leq k$. Pick $0<\varepsilon<\min\limits_{1\leq i\leq k} 1+\Vert x_i\Vert-r_i$. Pick also a non-empty relatively weakly open subset $W$ of $B_X$. By Lemma \ref{lemma:centrefindimsub} we can find $x\in W$ with 
$$\Vert x_i+G(x)\Vert>1+\Vert x_i\Vert-\varepsilon$$
holds for every $i\in\{1,\ldots, k\}$. Given $i\in\{1,\ldots, k\}$ we get that
$$\Vert x_i+G(x)\Vert>1-\Vert x_i\Vert-\varepsilon>1+\Vert x_i\Vert-(1+\Vert x_i\Vert-r_i)=r_i,$$
so $G(x)\in B_X\setminus B(x_i,r_i)$. Since $i$ was arbitrary $G(x)\in G(W)\cap O$, and the proof is complete.
\end{proof}

Now we are ready to prove one of the main results in this part.

\begin{theorem}\label{theo:daugacensep}
Let $X$ and $Y$ be two Banach spaces and $G:X\longrightarrow Y$ be a Daugavet center. Assume that $Y$ is separable. Then, given $u\in B_{X^{**}}$ and $\{g_n:n\in\mathbb N\}\subseteq S_{X^*}$, we can find $v\in S_{X^{**}}$ such that
\begin{enumerate}
\item $v(g_n)=u(g_n)$ holds for every $n\in\mathbb N$ and,
\item\label{ortosepdaucen} $\Vert G^{**}(v)+y\Vert=1+\Vert y\Vert$ holds for every $y\in Y$.
\end{enumerate}
In particular, given any non-empty relatively $w^*$-open subset $W$ of $B_{X^{**}}$ there exists $v\in W$ satisfying \eqref{ortosepdaucen}.
\end{theorem}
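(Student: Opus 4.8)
The plan is to realize the desired $v$ as a $w^*$-cluster point of a sequence $\{x_n\}\subseteq B_X$ whose images $G(x_n)$ are forced, via the Daugavet center property, to lie deep inside a decreasing family of ball-complements in $B_Y$. The $L$-orthogonality of $G^{**}(v)$ will then be read off from Lemma \ref{lema:gklemabxabier} applied inside $Y$, while the coordinate conditions on $v$ will be arranged by Goldstine's theorem.

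First I would normalize so that $\Vert G\Vert=1$. Since $Y$ is separable, the ball topology $b_Y$ on $B_Y$ admits a countable basis $\{O_n\}$, where each $O_n$ has the form $B_Y\setminus\bigcup_i B(y_i,r_i)$ with $r_i<1+\Vert y_i\Vert$. By Remark \ref{remark:octanoempint} the norm of $Y$ is octahedral, so any two, and hence by induction any finitely many, non-empty $b_Y$-open subsets of $B_Y$ intersect. I will also use the $\delta$-shrunk sets $O_n^{\delta}:=B_Y\setminus\bigcup_i B(y_i,r_i+\delta)$, which remain non-empty $b_Y$-open for $\delta$ small.

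Next, for each $n$ put $W_n:=\{x\in B_X:\ \abs{g_k(x)-u(g_k)}<1/n\ \text{for } 1\leq k\leq n\}$, a non-empty relatively weakly open subset of $B_X$ by Goldstine's theorem (as $u\in B_{X^{**}}=\overline{B_X}^{\,w^*}$). Applying Lemma \ref{lemma:daugacenterbxopen} to $W_n$ and to the non-empty $b_Y$-open set $\bigcap_{k\leq n}O_k^{1/n}\cap\bigl(B_Y\setminus B(0,1-1/n)\bigr)$, I obtain $x_n\in W_n$ with $G(x_n)$ in that intersection. Then $\Vert G(x_n)\Vert\to 1$, the coordinate control forces $g_k(x_n)\to u(g_k)$ for each $k$, and the $\delta$-shrinking together with $\Vert G(x_n)\Vert>1-1/n$ guarantees that the normalized vectors $w_n:=G(x_n)/\Vert G(x_n)\Vert\in S_Y$ still satisfy $w_n\in\bigcap_{k\leq n}O_k$ (indeed $\Vert w_n-G(x_n)\Vert<1/n$ pushes each $w_n$ back into the unshrunk sets).

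Finally, I would apply Lemma \ref{lema:gklemabxabier} in $Y$ to $\{w_n\}\subseteq S_Y$, obtaining a subsequence $\{w_{\sigma(n)}\}$ each of whose $w^*$-cluster points $\eta\in Y^{**}$ satisfies $\Vert y+\eta\Vert=1+\Vert y\Vert$ for all $y\in Y$. Letting $v$ be a $w^*$-cluster point of $\{x_{\sigma(n)}\}$ in $B_{X^{**}}$, the $w^*$-to-$w^*$ continuity of $G^{**}$ and $\Vert G(x_{\sigma(n)})-w_{\sigma(n)}\Vert\to 0$ make $G^{**}(v)$ a $w^*$-cluster point of $\{w_{\sigma(n)}\}$, so $\Vert G^{**}(v)+y\Vert=1+\Vert y\Vert$ for every $y\in Y$; with $y=0$ this gives $\Vert v\Vert=1$, and $v(g_k)=u(g_k)$ holds by the coordinate control, proving (1) and (2). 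The ``in particular'' assertion then follows by choosing any $u_0\in W$, shrinking $W$ to a basic neighbourhood $\{w\in B_{X^{**}}:\abs{g_k(w)-u_0(g_k)}<\delta\}$, and applying the statement to $u_0$ and the finitely many $g_k$, since $v(g_k)=u_0(g_k)$ places $v$ in $W$. The main obstacle I anticipate is precisely the passage to $S_Y$: Lemma \ref{lema:gklemabxabier} demands a sequence on the sphere, whereas $G$ need not attain its norm, so the $\delta$-shrinking of the basis together with the norm-control sets $B_Y\setminus B(0,1-1/n)$ is exactly the device that lets the normalized images remain inside the basic $b_Y$-open sets while their $w^*$-cluster points still coincide with $G^{**}(v)$.
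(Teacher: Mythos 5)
Your proof is correct and follows essentially the same route as the paper's: choose a countable basis $\{O_n\}$ of the $b_Y$-topology of $B_Y$, use Lemma \ref{lemma:daugacenterbxopen} on the weakly open coordinate sets $W_n$ (non-empty by Goldstine) to produce $x_n\in W_n$ with $G(x_n)\in\bigcap_{k\leq n}O_k$, invoke Lemma \ref{lema:gklemabxabier}, take a $w^*$-cluster point $v$ of the $x_n$, and conclude via the $w^*$-$w^*$-continuity of $G^{**}$. The one place you genuinely go beyond the paper is the normalization device: the paper applies Lemma \ref{lema:gklemabxabier} directly to $\{G(x_n)\}\subseteq B_Y$ even though the lemma is stated for sequences in the unit sphere, whereas your $\delta$-shrunk sets $O_k^{\delta}$ combined with the constraint $\Vert G(x_n)\Vert>1-1/n$ let you pass to the normalized $w_n\in S_Y$ while keeping $w_n\in\bigcap_{k\leq n}O_k$ and $\Vert G(x_{\sigma(n)})-w_{\sigma(n)}\Vert\to 0$, so $G^{**}(v)$ is still a $w^*$-cluster point of $\{w_{\sigma(n)}\}$; this patches a small imprecision that the published proof glosses over. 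One detail to tidy: the uniform shrink $\delta=1/n$ can empty $O_k^{1/n}$ for small $n$ (it requires $r_i+1/n<1+\Vert y_i\Vert$ for every ball occurring in $O_1,\dots,O_n$), so at stage $n$ you should instead take $\delta_n\leq 1/n$ small enough for the finitely many balls involved, or simply note that membership in $\bigcap_{k\leq n}O_k$ for all sufficiently large $n$ suffices, since only tails matter for cluster points. With that routine adjustment, everything — including your derivation of the ``in particular'' clause, after normalizing the finitely many defining functionals to lie in $S_{X^*}$ — goes through.
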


\begin{proof}
Pick $\{O_n\}_{n\in\mathbb N}$ to be a basis of the $b_Y$ topology of $B_Y$. Since $G$ is a Daugavet center, by Lemma \ref{lemma:daugacenterbxopen} we can find $x_n\in\{z\in B_X: \vert g_i(z)-G^{**}(u)\vert<\frac{1}{n}, 1\leq i\leq n\}$, which is a non-empty relatively weakly open subset of $B_X$, such that $G(x_n)\in \bigcap\limits_{i=1}^n O_i$ (note that an easy inductive argument together with Remark \ref{remark:octanoempint} implies that $\bigcap\limits_{i=1}^n O_i$ is non-empty for every $n\in\mathbb N$). By Lemma \ref{lema:gklemabxabier} there exists a subsequence of $G(x_n)$, which we will denote in the same way, so that any $w^*$-limit point is $u$ satisfies that $\Vert y+u\Vert=1+\Vert y\Vert$ holds for every $y\in Y$. Pick $v\in \{x_n\}'$, where the last accumulation is in the $w^*$-topology of $B_{X^{**}}$. It is obvious that $v(g_n)=u(g_n)$ holds for every $n\in\mathbb N$. Moreover, since $G^{**}$ is $w^*-w^*$-continuous we get that $G^{**}(v)$ is a limit point of $\{G(x_n)\}$. From here $\Vert G^{**}(v)+y\Vert=1+\Vert y\Vert$ holds for every $y\in Y$, and the proof is complete.
\end{proof}

Let us now generalise the previous theorem to the case when $\dens(Y)=\omega_1$.

\begin{theorem}\label{theo:daucenterome1}
Let $X$ and $Y$ be two Banach spaces and $G:X\longrightarrow Y$ be a Daugavet center such that $G(X)$ is separable. If $\dens(Y)=\omega_1$ then, for every $u\in B_{X^{**}}$ and every $\{g_n: n\in\mathbb N\}\subseteq S_{X^*}$ we can find $v\in B_{X^{**}}$ such that
\begin{enumerate}
\item $v(g_n)=u(g_n)$ holds for every $n\in\mathbb N$ and,
\item\label{ortome1daucen} $\Vert G^{**}(v)+y\Vert=1+\Vert y\Vert$ holds for every $y\in Y$.
\end{enumerate}
In particular, given any non-empty relatively $w^*$-open subset $W$ of $B_{X^{**}}$ there exists $v\in W$ satisfying \eqref{ortome1daucen}.
\end{theorem}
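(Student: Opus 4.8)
The plan is to reduce the non-separable statement to the separable one in Theorem~\ref{theo:daugacensep} by means of a transfinite construction along a chain of separable almost isometric ideals of $Y$, using the Hahn--Banach extension machinery of Theorems~\ref{theo:hbaioper} and~\ref{theo:exteaiideales} to transport $L$-orthogonality from the separable biduals up to $Y^{**}$. First I would fix a dense family $\{d_\xi:\xi<\omega_1\}$ in $Y$ and build, by recursion on $\alpha<\omega_1$, an increasing continuous chain of separable subspaces $Z_\alpha\subseteq Y$ with $\overline{G(X)}\subseteq Z_0$ and $d_\alpha\in Z_{\alpha+1}$ (so that $\bigcup_\alpha Z_\alpha$ is dense in $Y$), taking $Z_\alpha$ to be an almost isometric ideal at successor stages (via Theorem~\ref{theo:exteaiideales}) and the closed union at limit stages; at each successor stage $Z_\alpha$ carries an almost isometric Hahn--Banach extension operator $\varphi_\alpha:Z_\alpha^*\to Y^*$ and the associated norm-one projection $P_\alpha=\varphi_\alpha^*:Y^{**}\to Z_\alpha^{**}$. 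Simultaneously I would build an increasing continuous chain of separable subspaces $H_\alpha\subseteq X^*$ with $\{g_n\}\subseteq H_0$ and $G^*(\varphi_\alpha(Z_\alpha^*))\subseteq H_\alpha$ at successor stages; the point of this last inclusion is that, since $\langle P_\alpha G^{**}(v),z^*\rangle=\langle v,G^*(\varphi_\alpha(z^*))\rangle$ for $z^*\in Z_\alpha^*$, the element $P_\alpha G^{**}(v)\in Z_\alpha^{**}$ depends only on the restriction $v|_{H_\alpha}$.

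Next I would construct a coherent transfinite sequence $(v_\alpha)_{\alpha<\omega_1}$ in $B_{X^{**}}$ with $v_\alpha|_{H_\beta}=v_\beta|_{H_\beta}$ for $\beta\le\alpha$ and with $G^{**}(v_\alpha)$ $L$-orthogonal to $Z_\alpha$. The base and successor steps rest on the observation that, as $G(X)\subseteq Z_0\subseteq Z_{\alpha+1}$, the corestriction $G:X\to Z_{\alpha+1}$ is again a Daugavet center (the norms of $G+T$ and of $T$ are unchanged when the range is cut down to a closed subspace containing $G(X)$). Applying Theorem~\ref{theo:daugacensep} to this corestriction, with the prescribed point $v_\alpha$ (resp.\ $u$ at the base) and a countable dense subset of $S_{H_\alpha}$ as the family of functionals, yields $v_{\alpha+1}\in B_{X^{**}}$ that agrees with $v_\alpha$ on $H_\alpha$ (hence on every $H_\beta$, $\beta\le\alpha$) and for which $G^{**}(v_{\alpha+1})$ is $L$-orthogonal to $Z_{\alpha+1}$ computed in $Z_{\alpha+1}^{**}$. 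Since $P_{\alpha+1}$ is a norm-one projection fixing the points of $Z_{\alpha+1}$, the inequality $\Vert G^{**}(v_{\alpha+1})+z\Vert_{Y^{**}}\ge\Vert P_{\alpha+1}G^{**}(v_{\alpha+1})+z\Vert_{Z_{\alpha+1}^{**}}=1+\Vert z\Vert$, together with the trivial reverse bound, upgrades this to $L$-orthogonality to $Z_{\alpha+1}$ inside $Y^{**}$. At a limit ordinal $\alpha$ I would define $v_\alpha$ on $\overline{\bigcup_{\beta<\alpha}H_\beta}=H_\alpha$ by the common coherent value and extend it to $B_{X^{**}}$ by Hahn--Banach; since $G^{**}(v_\alpha)$ is then $L$-orthogonal to each $Z_\beta$ with $\beta<\alpha$, and these are dense in $Z_\alpha$, the continuity of $y\mapsto\Vert G^{**}(v_\alpha)+y\Vert$ and of $y\mapsto 1+\Vert y\Vert$ gives $L$-orthogonality to all of $Z_\alpha$.

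Finally I would let $v\in B_{X^{**}}$ be the Hahn--Banach extension of the coherent functional determined on $\overline{\bigcup_{\alpha<\omega_1}H_\alpha}$ by $v|_{H_\alpha}=v_\alpha|_{H_\alpha}$. Then $v(g_n)=v_0(g_n)=u(g_n)$ for all $n$, while for each successor $\alpha$ the identity $P_\alpha G^{**}(v)=P_\alpha G^{**}(v_\alpha)$ (valid because $v$ and $v_\alpha$ coincide on $H_\alpha\supseteq G^*(\varphi_\alpha(Z_\alpha^*))$) shows, via the projection argument above, that $G^{**}(v)$ is $L$-orthogonal to $Z_\alpha$; as $\bigcup_\alpha Z_\alpha$ is dense in $Y$, a last application of norm continuity yields $\Vert G^{**}(v)+y\Vert=1+\Vert y\Vert$ for every $y\in Y$. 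The main obstacle, and the reason the naive approach of taking a single $w^*$-cluster point of $L$-orthogonal elements fails, is that $L$-orthogonality is not preserved under $w^*$-limits (the norm is only $w^*$-lower semicontinuous, which controls the wrong inequality); the construction circumvents this by \emph{locking in} the relevant coordinates of $v$ once and for all on the separable subspaces $H_\alpha$, so that the separable result can be invoked at each successor step without ever disturbing the $L$-orthogonalities already secured. The technical care needed to keep the chains $(Z_\alpha)$ and $(H_\alpha)$ increasing and continuous, and to guarantee that $v|_{H_\alpha}$ genuinely determines $P_\alpha G^{**}(v)$, is where the bookkeeping concentrates.
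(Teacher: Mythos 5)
Your overall scheme is the same as the paper's: a transfinite induction over an increasing chain of separable almost isometric ideals $Z_\alpha\supseteq G(X)$ in $Y$, the observation that the corestriction of $G$ to $Z_\alpha$ is again a Daugavet center, an application of Theorem~\ref{theo:daugacensep} at each step, and the transfer of $L$-orthogonality from $Z_\alpha^{**}$ up to $Y^{**}$ via the identity $\langle \varphi_\alpha^*G^{**}(w),z^*\rangle=\langle w,G^*(\varphi_\alpha(z^*))\rangle$, which is exactly the Claim inside the paper's proof. However, there is a genuine gap in the bookkeeping on which your entire ``locking in'' mechanism rests: you demand \emph{separable} subspaces $H_\alpha\subseteq X^*$ with $G^*(\varphi_\alpha(Z_\alpha^*))\subseteq H_\alpha$, and this requirement is unsatisfiable. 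Since the corestriction $G_\alpha:X\longrightarrow Z_\alpha$ is a Daugavet center, the norm of $Z_\alpha$ is octahedral (Remark~\ref{remark:octanoempint}), hence $Z_\alpha$ contains an isomorphic copy of $\ell_1$ by Godefroy's theorem \cite{god}, so $Z_\alpha^*$ is never norm-separable; and $G^*\circ\varphi_\alpha$ need not collapse it: in the motivating case $X=Y$ and $G=I$ (a space with the Daugavet property and $\dens(X)=\omega_1$), $G^*\circ\varphi_\alpha=\varphi_\alpha$ is an isometry, so $G^*(\varphi_\alpha(Z_\alpha^*))$ is itself nonseparable. Consequently the successor step, which via Theorem~\ref{theo:daugacensep} can only prescribe the values of $v_{\alpha+1}$ on countably many functionals (equivalently, on a separable $H_\alpha$), can never secure the identity $P_\alpha G^{**}(v)=P_\alpha G^{**}(v_\alpha)$ that you invoke both at limit stages and for the final $v$; as written, the argument fails precisely at the point where $L$-orthogonality to $Z_\alpha$ is supposed to be recovered.

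The repair is the device the paper actually uses: do not attempt to determine the full projection $P_\alpha G^{**}(v)\in Z_\alpha^{**}$, since countably many coordinates suffice. At stage $\alpha$ fix a \emph{countable norming set} $\{f_{\alpha,\beta}:\beta<\alpha\}\subseteq S_{Y^*}$ for the separable subspace $Z_\alpha\oplus\mathbb R\, G^{**}(v_\alpha)$ of $Y^{**}$ (push forward a norming sequence from $S_{Z_\alpha^*}$ by $\varphi_\alpha$, using your identity $G_\alpha^{**}(v_\alpha)(z^*)=G^{**}(v_\alpha)(\varphi_\alpha(z^*))$), and lock in only the countably many values $v_\delta(G^*(f_{\alpha,\beta}))=v_\alpha(G^*(f_{\alpha,\beta}))$ for $\delta>\alpha$. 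Then, given $y\in S_Y$ and $\varepsilon>0$, one picks $y_\alpha$ with $\Vert y-y_\alpha\Vert<\varepsilon/3$ and $\beta<\alpha$ with $G^{**}(v_\alpha)(f_{\alpha,\beta})+f_{\alpha,\beta}(y_\alpha)>2-\varepsilon/3$, and concludes $\Vert G^{**}(v)+y\Vert>2-\varepsilon$ directly from $G^{**}(v)(f_{\alpha,\beta})=G^{**}(v_\alpha)(f_{\alpha,\beta})$, with no projection identity needed. The rest of your write-up is sound and matches the paper: the corestriction argument, the norm-one projection $\varphi_\alpha^*$ fixing $Z_\alpha$ for upgrading $L$-orthogonality at successor steps, the use of coherence (your Hahn--Banach extension of the coherent functional plays the role of the paper's $w^*$-cluster points of the nets $\{v_\beta\}_{\beta<\alpha}$, and either works once only countably many functionals per stage are tracked), and the density-plus-continuity argument, which the paper replaces by the explicit $\varepsilon/3$ estimate above. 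Your diagnosis that $w^*$-lower semicontinuity of the norm controls the wrong inequality, so that a naive cluster-point argument fails, is also exactly the difficulty the paper's construction is designed to circumvent.
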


\begin{proof}
The proof will follow the lines of \cite[Theorem 3.3]{loru}. Pick $\{y_\alpha\}_{\alpha<\omega_1}$ be a dense subset of $S_Y$. Let us construct, by transfinite induction, a family $\{(Z_\alpha,\varphi_\alpha, \{f_{\alpha,\beta}: \beta<\alpha\}, v_\alpha): \alpha\in\omega_1\}$ satisfying that, for every $\alpha<\omega_1$, then:
\begin{enumerate}
\item $Z_\alpha$ is a separable almost isometric ideal in $Y$ containing $\bigcup\limits_{\beta<\alpha}Z_\beta
\cup\{y_\alpha\}$ and $G(X)\subseteq Z_\alpha$.
\item $\varphi_\alpha:Z_\alpha^*\longrightarrow Y^*$ is an almost-isometric Hahn-Banach extension operator such that $\varphi_\alpha(Z_\alpha^*)\supset \{f_{\beta,\gamma}: \gamma<\beta<\alpha\}$.
\item The equality
$$\Vert G^{**}(v_\alpha)+y\Vert=1+\Vert y\Vert$$
holds for every $y\in Z_\alpha$, and $\{f_{\alpha,\beta}: \beta<\alpha\}\subseteq S_{Y^*}$ is norming for $Z_\alpha\oplus \mathbb R v_\alpha$.
\item $v_\alpha(G^*(f_{\beta,\gamma}))=v_\beta(G^*(f_{\beta,\gamma}))$ holds for every $\gamma<\beta<\alpha$ and
$$v_\alpha(g_n)=u(g_n)$$
holds for every $n\in\mathbb N$.
\end{enumerate}

The case $\alpha=\omega_0$ follows finding, in virtue of Theorem \ref{theo:exteaiideales}, a separable almost isometric ideal $Z_\alpha$ containing $G(X)\cup\{y_\beta: \beta\leq \omega_0\}$. Notice that, if we consider $G_\alpha:X\longrightarrow Z_\alpha$ to be the restriction to the codomain, then $G_\alpha$ is clearly a Daugavet center. An application of Theorem \ref{theo:daugacensep} yields an element $v_\alpha\in S_{X^{**}}$ so that $v_\alpha(g_n)=u(g_n)$ holds for every $n\in\mathbb N$ and such that
$$\Vert G_\alpha^{**}(v_\alpha)+y\Vert=1+\Vert y\Vert$$
holds for every $y\in Z_\alpha$. Pick $\{g_{\alpha,\beta}: \beta<\alpha\}\subseteq S_{Z_{\alpha}^*}$ to be a norming subset for $Z_\alpha\oplus \mathbb R G_\alpha^{**}(v_\alpha)$ and define $f_{\alpha,\beta}:=\varphi_\alpha^*(g_{\alpha,\beta})\in S_{Y^*}$. The proof of the case $\alpha=\omega_0$ will be complete when we prove the following:
\begin{claim}
For every $z^*\in Z_\alpha^*$ the following equality
$$G_\alpha^{**}(v_\alpha)(z^*)=G^{**}(v_\alpha)(\varphi_\alpha(z^*))$$
holds.
\end{claim}

\begin{proof}
Pick $z^*\in Z_\alpha^*$ and pick a net $(x_s)\subseteq B_{X^{**}}$ such that $x_s\rightarrow v_\alpha$ in the weak-star topology of $B_X$. Then $G(x_s)\rightarrow G^{**}(v_\alpha)$ in the weak-star topology of $B_{Y^{**}}$ because $G^{**}$ is $w^*-w^*$ continuous. In particular, $G(x_s)(\varphi_\alpha(z^*))\rightarrow G^{**}(v_\alpha)(\varphi_\alpha(z^*))$. A similar argument shows that $G_\alpha(x_s)(z^*)\rightarrow G_\alpha^{**}(v_\alpha)(z^*)$. Now, given $s$, taking into account that $G(x_s)\in G(X)\subseteq Z_\alpha$ (and so $G_\alpha(x_s)=G(x_s)$) and the fact that $\varphi_\alpha$ is a Hahn-Banach extension operator we deduce that
$$G_\alpha(x_s)(z^*)=\varphi_\alpha(z^*)(G(x_s)).$$
Using the uniqueness of the limit the claim is proved.
\end{proof}
Summarising, we have constructed $(Z_{\omega_0},\varphi_{\omega_0},\{f_{\omega_0,\beta}:\beta<\omega_0\},v_{\omega_0})$.

Now assume by induction hypothesis that $\{(Z_\beta,\varphi_\beta, \{f_{\beta,\gamma}: \gamma<\beta\}, v_\beta)\}_{\beta<\alpha}$ has been constructed, and let us construct $(Z_\alpha, \varphi_\alpha, \{f_{\alpha,\gamma}: \gamma<\alpha\},v_\alpha)$. To this end, pick a cluster point $v\in B_{X^{**}}$ of the net $\{v_\beta\}_{\beta<\alpha}$ (where we consider on $[0,\alpha[$ the classical order). Notice that, by induction hypothesis and the cluster point condition, we get that
$$v(G^*(f_{\beta,\gamma}))=v_\beta(G^*(f_{\beta,\gamma}))$$
holds for every $\gamma<\beta<\alpha$ and
$$v(g_n)=u(g_n)$$
holds for every $n\in\mathbb N$.

 Find, again by Theorem \ref{theo:exteaiideales}, a separable almost isometric ideal $Z_\alpha$ in $Y$ containing the separable set $\bigcup\limits_{\beta<\alpha} Z_\beta\cup\{y_\alpha\}$  and take an almost-isometric Hanh-Banach extension operator $\varphi_\alpha:Z_\alpha^*\longrightarrow Y^*$ such that $\varphi_\alpha(Z_\alpha^*)$ contains the countable set $\{f_{\beta,\gamma}: \gamma<\beta<\alpha\}$. By the same argument as in the case $\omega_0$, by an application of Theorem \ref{theo:daugacensep} we find $v_\alpha\in B_{X^{**}}$ such that 
\begin{enumerate}
\item $v_\alpha(G^*(f_{\beta,\gamma}))=v(G^*(f_{\beta,\gamma}))$ for $\gamma<\beta$ and $v_\alpha(g_n)=v(g_n)$ holds for every $n\in\mathbb N$.
\item $\Vert G^{**}(v_\alpha)+y\Vert=1+\Vert y\Vert$ holds for every $y\in Y$.
\end{enumerate}
Pick $\{f_{\alpha,\beta}: \beta<\alpha\}\subseteq S_{Y^*}$ to be a norming set for $Z_\alpha\oplus \mathbb R G^{**}(v_\alpha)$.
All the above proves that $(Z_\alpha, \varphi_\alpha, \{f_{\alpha,\gamma}: \gamma<\alpha\},v_\alpha)$ satisfies our requirements and completes the inductive argument.

Now we have proved the existence of the chain $(Z_\alpha, \varphi_\alpha, \{f_{\alpha,\gamma}: \gamma<\alpha\},v_\alpha)_{\alpha<\omega_1}$. Take $v\in B_{X^{**}}$ to be a limit point of the net $\{v_\alpha\}_{\alpha<\omega_1}$ (where we again consider on $[0,\omega_1[$ the classical order). It is inmediate that $v(g_n)=u(g_n)$ holds for every $n\in\mathbb N$. Let us prove, to finish, that 
$$\Vert G^{**}(v)+y\Vert=1+\Vert y\Vert$$
holds for every $y\in Y$. By a homogeneity argument, we can assume with no loss of generality that $y\in S_Y$. To this end, pick $\varepsilon>0$ and find, by the denseness of $\{y_\alpha:\alpha<\omega_1\}$, an element $y_\alpha$ so that $\Vert y-y_\alpha\Vert<\frac{\varepsilon}{3}$. Since $\Vert G^{**}(v_\alpha)+y_\alpha\Vert=2$ and $\{f_{\alpha,\beta}: \beta<\alpha\}$ is norming for $Z_\alpha\oplus G^{**}(v_\alpha)$ we can find $\beta<\alpha$ so that
$$G^{**}(v_\alpha)(f_{\alpha,\beta})+f_{\alpha,\beta}(y_\alpha)>2-\frac{\varepsilon}{3}.$$
Since $G^{**}(v_\delta)(f_{\alpha,\beta})=G^{**}(v_\alpha)(f_{\alpha,\beta})$ holds for every $\delta>\alpha$ and it is clear because of the $w^*-w^*$ continuity of $G^{**}$ that $G^{**}(v)$ is a cluster point of the net $(G^{**}(v_\delta))_{\delta<\omega_1}$, we conclude that $G^{**}(v)(f_{\alpha,\beta})=G^{**}(v_\alpha)(f_{\alpha,\beta})$ and so
$$\Vert y_\alpha+G^{**}(v)\Vert\geq 2-\frac{\varepsilon}{3},$$
thus $\Vert G^{**}(v)+y\Vert>2-\varepsilon$. Since $\varepsilon>0$ was arbitrary we conclude that $\Vert G^{**}(v)+y\Vert=2$, and the proof is complete.
\end{proof}

\begin{remark}\label{rema:caradaucenter}
The converse of Theorem \ref{theo:daucenterome1} is true in complete generality (and so it is actually a complete characterisation of when an operator with separable image is a Daugavet center via $L$-orthogonality). To be more precise, assume that $G:X\longrightarrow Y$ is an operator such that, for every non-empty $w^*$-open subset of $B_{X^{**}}$ there exists $u\in W$ with $\Vert G^{**}(u)+y\Vert=1+\Vert y\Vert$ holds for every $y\in Y$. Then $G$ is a Daugavet center.

In order to see it, fix $y_0\in S_Y$, $\varepsilon>0$ and a slice $S=S(B_X,f,\alpha)$. Since $S(B_{X^{**}},f,\alpha)$ is non-empty and $w^*$-open, by assumption there exists $u\in S(B_{X^{**}},f,\alpha)$ such that $\Vert G^{**}(u)+y\Vert=1+\Vert y\Vert$ holds for ever $y\in Y$. Find a net $\{x_s\}$ in $B_X$ weak-star convergent to $u$. The $w^*-w^*$ continuity of $G^{**}$ implies that $G(x_s)=G^{**}(x_s)$ coverges to $G^{**}(u)$ in the $w^*$ topology. The $w^*$-lower semicontinuity of the norm of $Y^{**}$ implies that
$$2=\Vert G^{**}(u)+y_0\Vert\leq \liminf_s
\Vert G(x_s)+y\Vert,$$
so we can find $s$ large enough so that $f(x_s)>1-\alpha$ (i.e. $x_s\in S$) and $\Vert G(x_s)+y_0\Vert>2-\varepsilon$. According to \cite[Theorem 2.1 (iii)]{bk}, $G$ is a Daugavet center.
\end{remark}

We do not know whether Theorem \ref{theo:daucenterome1} remains true if we remove the assumption that $T(X)$ is separable. In order to make use of Theorem \ref{theo:daucenterome1} together with an inductive argument we would need any result which guarantee that the codomain restriction of a Daugavet center is a Daugavet center.

However, we know that $\dens(Y)=\omega_1$ can not be removed in general, as the following example shows.

\begin{example}\label{examp:countercenter}
Take $X=Y=\ell_2(\mathcal P(\mathbb R))\iten C([0,1])$. Then $X$ has the Daugavet property \cite[P. 81]{werner}, but there exists no element $v\in S_{X^{**}}$ such that
$$\Vert x+v\Vert=1+\Vert x\Vert$$
holds for every $x\in X$ \cite[Example 3.8]{loru}. Hence, the identity operator $I:X\longrightarrow X$ is a Daugavet center which does not satisfies the thesis of Theorem \ref{theo:daucenterome1}. 

Note that, under continuum hypothesis, $\dens(\ell_2(\mathcal P(\mathbb R)))=card(\mathcal P(\mathbb R))=\omega_2$, and so the result is sharp.\end{example}

\begin{remark}
Very recently, E. R. Santos considered in \cite{santos2020} the concept of \textit{polynomial Daugavet center}. Given two Banach spaces $X$ and $Y$, it is said that a norm-one polynomial $Q:X\longrightarrow Y$ is a \textit{polynomial Daugavet center} if the Daugavet equation
$$\Vert Q+P\Vert=1+\Vert P\Vert$$
holds for every continuous rank-one polynomial $P\in \mathcal P(X,Y)$. After the proof of \cite[Proposition 2.6]{santos2020}, the author posed the following question: if $Q$ is a polynomial Daugavet center and $S=S(P,\alpha)$ is a polynomial slice of $B_{X^{**}}$, is there any $u\in S\cap S_{X^{**}}$ such that
\begin{equation}\label{eq:condisantos}
 \Vert \hat Q(u)+sign(P(u))y\Vert=1+\Vert y\Vert\end{equation}
holds for every $y\in Y$?

Incidentaly, Remark \ref{examp:countercenter} also provides a negative answer to Santos' question. Indeed, given $X=\ell_2(\mathcal P(\mathbb R))\iten C([0,1])$, the identity operator $I:X\longrightarrow X$ is actually a polynomial Daugavet center because $X$ has the \textit{polynomial Daugavet property} \cite[Corollary 2.5]{cgmm} which, in the languaje of polynomial Daugavet centers, means nothing but that $I$ is a polynomial Daugavet center. However, there is no $u$ satisfying equation \ref{eq:condisantos} because there exists no $u\in S_{X^{**}}$ such that
$$\Vert x+u\Vert=1+\Vert x\Vert$$
holds for every $x\in X$.
\end{remark}

\section{Narrow operators}\label{sect:narrop}

Different notions of narrow operators have appeared in the literature for concrete classes of Banach spaces (see \cite[Section 4]{werner}). Here we will consider the general notion considered in \cite[Section 4]{werner}.

\begin{definition}\label{defi:narrowop}
Let $X$ and $Y$ be two Banach spaces. We say that a bounded operator $T:X\longrightarrow Y$ is \textit{narrow} if, given $x,y\in S_X$, $\varepsilon>0$ and a slice $S$ containing $y$, there exists $z\in S$ such that $\Vert x+z\Vert>2-\varepsilon$ and $\Vert T(y-z)\Vert<\varepsilon$.
\end{definition}

It is clear, by the celebrated characterisation of the Daugavet property from \cite[Lemma 2.1]{kssw}, that a Banach space $X$ has the Daugavet property if, and only if, there exists a narrow operator from $X$ to any Banach space. One reason why narrow operators are interesting that if $T:X\longrightarrow X$ is a narrow operator then $T$ satisfies the Daugavet equation (see e.g. \cite[Lemma 4.3]{werner}). Another interest of narrow operators is that they are useful in order to pass the Daugavet property from a space to a subspace (see \cite[Section 5]{werner} for background on \textit{rich subspaces}). See \cite{akmms,kkw,kkw2,ksew,ksw,werner} and references therein for background.

Let us announce the following result, coming from \cite[Proposition 4.12]{ksw}, which we include here for easy future reference.

\begin{theorem}\label{theo:sropenarrow}\cite[Proposition 4.12]{ksw}
Let $X$ be a Banach space with the Daugavet property, $Y$ be a Banach space and $T:X\longrightarrow Y$ be a narrow operator. Then, for every finite-dimensional subspace $E$ of $X$, every $y\in B_X$, every weakly open subset $W$ of $B_X$ containing $y$ and every $\varepsilon>0$ we can find $x\in W$ such that:
\begin{enumerate}
\item $\Vert e+\lambda x\Vert>(1-\varepsilon)(\Vert e\Vert+\vert\lambda\vert)$
holds for every $e\in E$ and every $\lambda\in\lambda$; and 
\item $\Vert T(x-y)\Vert<\varepsilon$.
\end{enumerate}
\end{theorem}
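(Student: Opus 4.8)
The plan is to combine the finite-dimensional octahedrality that the Daugavet property forces on $X$ with the additional control on $T$ coming from narrowness, organising everything as a finite induction over a net of $S_E$ in which each step simultaneously spreads the current point and keeps its $T$-image close to $T(y)$. The reason the plain Daugavet property will not suffice is exactly the tension built into the two conclusions: condition (1) wants $x$ to be \emph{spread out} (so that $\Vert e+x\Vert\approx 2$, i.e. $x$ is pushed away from any fixed direction), while condition (2) wants $x$ to stay $T$-close to the fixed $y$; reconciling these is precisely what narrowness does.

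First I would reduce (1) to finitely many norm estimates. Since $\Vert e+\lambda x\Vert>(1-\varepsilon)(\Vert e\Vert+\vert\lambda\vert)$ is positively homogeneous of degree one in $(e,\lambda)$ and $E$ is a (symmetric) subspace, it suffices to produce $x$ with $\Vert x\Vert$ close to $1$ and $\Vert e_j+x\Vert>2-\varepsilon'$ for every point $e_j$ of a sufficiently fine finite net $\{e_1,\dots,e_n\}$ of $S_E$, where $\varepsilon'$ depends only on $\varepsilon$ and the mesh. Indeed, if $\psi_j\in S_{X^*}$ norms $e_j+x$ then $\psi_j(e_j)>1-\varepsilon'$ and $\psi_j(x)>1-\varepsilon'$, whence $\Vert t e_j+x\Vert\geq t\psi_j(e_j)+\psi_j(x)\geq(1-\varepsilon')(t+1)$ for all $t\geq 0$; approximating an arbitrary $e\in S_E$ by a net point and using homogeneity in $\lambda$ then recovers (1).

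The engine of the construction is a weakly open, anchored form of narrowness: for $x_0\in S_X$, a non-empty relatively weakly open $U\subseteq B_X$, an anchor $p\in U\cap S_X$, and $\tau>0$, there is $z\in U$ with $\Vert x_0+z\Vert>2-\tau$ and $\Vert T(z-p)\Vert<\tau$. This is the natural upgrade of Definition \ref{defi:narrowop} from slices to weakly open sets by the Bourgain--Shvidkoy technique: a non-empty weakly open subset of $B_X$ contains a convex combination of slices, the defining property of narrow operators is applied on a slice through the anchor, and both the spreading and the $T$-smallness survive passage to convex combinations via $\Vert T(\sum_k\lambda_k z_k-p)\Vert\leq\sum_k\lambda_k\Vert T(z_k-p)\Vert$. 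With this in hand I run an induction producing weakly open sets $W=W_0\supseteq W_1\supseteq\cdots\supseteq W_n$ and points $z_0:=y,z_1,\dots,z_n$. At step $i$ I apply the anchored form with direction $x_0=e_i$, anchor $p=z_{i-1}\in W_{i-1}$ and tolerance $\tau_i$, obtaining $z_i\in W_{i-1}$ with $\Vert e_i+z_i\Vert>2-\tau_i$ and $\Vert T(z_i-z_{i-1})\Vert<\tau_i$; then I \emph{freeze} the estimate by choosing $\phi_i\in S_{X^*}$ norming $e_i+z_i$ (so $\phi_i(e_i)>1-\tau_i$ and $\phi_i(z_i)>1-\tau_i$) and setting $W_i:=W_{i-1}\cap S(B_X,\phi_i,\delta_i)$, which still contains $z_i$. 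The point of freezing is that every $x\in W_i\subseteq S(B_X,\phi_i,\delta_i)$ satisfies $\Vert e_i+x\Vert\geq\phi_i(e_i)+\phi_i(x)>2-\tau_i-\delta_i$, so the $i$-th estimate is inherited by all later points, while $z_i\in W_i$ keeps the induction (and the anchor) alive.

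Finally I would take $x:=z_n$. Since $x\in W_n\subseteq S(B_X,\phi_j,\delta_j)$ for every $j\leq n$, all the net estimates $\Vert e_j+x\Vert>2-\tau_j-\delta_j$ hold at once, yielding (1) by the reduction above once the $\tau_j,\delta_j$ and the mesh are small; and telescoping gives $\Vert T(x-y)\Vert\leq\sum_{i=1}^n\Vert T(z_i-z_{i-1})\Vert<\sum_{i=1}^n\tau_i<\varepsilon$, which is (2). The main obstacle is the anchored weakly open form of narrowness, and specifically preserving the anchor through the iteration: the freezing slices push $y$ itself out of the sets $W_i$, so one cannot keep anchoring at $y$, and must instead anchor at the moving point $z_{i-1}$ and telescope the $T$-estimates, which is viable only because the anchored form keeps the output inside the current weakly open set. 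Two routine points remain to be dispatched: the anchors must lie in $S_X$, which is immediate for $z_i$ since $\Vert e_i+z_i\Vert>2-\tau_i$ forces $\Vert z_i\Vert>1-\tau_i$ (normalise and absorb the error), and the reduction of the stated hypothesis $y\in B_X$ to the case $y\in S_X$ needed in Definition \ref{defi:narrowop}, handled by a short normalisation/perturbation at the base step.
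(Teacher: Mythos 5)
The paper itself offers no proof of this statement: it is imported verbatim from \cite[Proposition 4.12]{ksw}, so your attempt must be measured against the standard argument there. Your outer architecture is fine and matches the standard organisation: the reduction of (1) to finitely many estimates $\Vert e_j+x\Vert>2-\varepsilon'$ over a net of $S_E$ is correct (including the symmetric treatment of $\lambda<0$ and the fact that $\Vert e_j+x\Vert>2-\varepsilon'$ forces $\Vert x\Vert>1-\varepsilon'$), and the induction that freezes each estimate with a norming functional $\phi_i$ via the slice $S(B_X,\phi_i,\delta_i)$ and telescopes the $T$-estimates along the moving anchors $z_{i-1}$ is exactly the right skeleton. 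But the whole content of the proposition is concentrated in your ``engine'', the anchored weakly open form of narrowness, and your derivation of it fails at its central point.

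Two concrete problems. First, and fatally, the spreading estimate does not survive convex combinations: from $\Vert x_0+z_k\Vert>2-\tau$ for each $k$ you cannot conclude $\Vert x_0+\sum_k\lambda_k z_k\Vert>2-\tau$, because the norming functionals vary with $k$ and convexity of the norm gives only the useless upper bound. Explicitly, in $C([0,1])$ take $x_0\equiv 1$ and $z_1(t)=\cos(2\pi t)$, $z_2:=-z_1$; then $\Vert x_0+z_1\Vert=\Vert x_0+z_2\Vert=2$ while $\Vert x_0+\tfrac{1}{2}(z_1+z_2)\Vert=1$. Your sentence ``both the spreading and the $T$-smallness survive passage to convex combinations'' justifies only the $T$-half. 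Second, Bourgain's lemma gives some convex combination $\sum_k\lambda_k S_k\subseteq U$ but no representation $p=\sum_k\lambda_k y_k$ with $y_k\in S_k$, so your telescoping bound on $\Vert T(\sum_k\lambda_k z_k-p)\Vert$ has an uncontrolled term. The second defect is repairable without any refined Bourgain lemma: by Hahn--Banach, Definition \ref{defi:narrowop} is equivalent to $y\in\cconv\{z\in B_X:\Vert x+z\Vert>2-\varepsilon,\ \Vert T(y-z)\Vert<\varepsilon\}$, so every weak neighbourhood of $y$ meets the convex hull of that set with all $z_k$ anchored at the \emph{same} $y$ --- but then the first defect bites again. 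The missing idea is precisely what makes \cite[Proposition 4.12]{ksw} nontrivial, and there are two known ways to supply it: either prove first that narrowness is stable under adjoining finitely many scalar coordinates (if $T$ is narrow and $f_1,\dots,f_n\in X^*$, then $x\mapsto (Tx,f_1(x),\dots,f_n(x))$ is narrow), which absorbs the basic weak neighbourhood $\{z\in B_X:\vert f_j(z-y)\vert<\alpha\}$ into the tolerance and reduces $U$ to a slice through $y$ --- in substance how the passage from slices to weakly open sets is effected in \cite{ksw} --- or choose the $z_k$ sequentially, each almost diametral to the normalised running sum $x_0+\sum_{j<k}\lambda_j z_j$ rather than to $x_0$, so that the spreading telescopes multiplicatively. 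Relatedly, your base step is not a ``short normalisation'': Definition \ref{defi:narrowop} anchors at points of $S_X$, while the statement allows $y\in B_X$ (even $y=0$, where $\Vert T(x-y)\Vert<\varepsilon$ asks for $x$ nearly in $\ker T$); extending the anchor into the open ball by writing $y$ as a convex combination of sphere points runs into the same averaging obstruction and needs the same machinery.
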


The previous theorem allows us to get the following result.

\begin{theorem}\label{theo:sepasrLornarrow}
Let $X$ be a separable Banach space, $Y$ be a Banach space and $T:X\longrightarrow Y$ be narrow operator. Then, given any $y\in B_X$ and any subset $\{g_n: n\in\mathbb N\}\subseteq S_{X^*}$ we can find $u\in S_{X^{**}}$ such that
\begin{enumerate}
\item\label{ortonarrowsep} $\Vert x+u\Vert=1+\Vert x\Vert$ holds for every $x\in X$.
\item\label{iguanarrowsep} $T^{**}(u)=T(y)$.
\item $u(g_n)=g_n(y)$ holds for every $n\in\mathbb N$.
\end{enumerate}
In particular, given any non-empty $w^*$-open subset $W$ of $B_{X^{**}}$ there exists $u\in W$ satisfying \eqref{ortonarrowsep} and \eqref{iguanarrowsep}.
\end{theorem}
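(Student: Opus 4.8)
The plan is to mimic the strategy of Theorem~\ref{theo:daugacensep}, but working inside $X$ itself and using Theorem~\ref{theo:sropenarrow} as the engine in place of Lemma~\ref{lemma:daugacenterbxopen}. First I would note that, since $T$ is narrow, its defining inequality $\Vert x+z\Vert>2-\varepsilon$ is precisely the slice characterisation of the Daugavet property from \cite{kssw}; hence $X$ has the Daugavet property, so Theorem~\ref{theo:sropenarrow} is available and, as recorded in Remark~\ref{remark:octanoempint}, every finite intersection of non-empty $b_X$-open subsets of $B_X$ is non-empty. Since $X$ is separable, fix a countable basis $\{O_n\}_{n\in\mathbb N}$ for the $b_X$-topology of $B_X$, so that each $\bigcap_{k=1}^n O_k$ is a non-empty $b_X$-open set. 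The goal is to build a sequence $\{x_n\}\subseteq S_X$ with $x_n\in\bigcap_{k=1}^n O_k$, with $\vert g_k(x_n)-g_k(y)\vert$ small for $k\leq n$, and with $\Vert T(x_n-y)\Vert$ small; a $w^*$-cluster point of a suitable subsequence will then be the desired $u$.

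For the construction, fix $n$ and write $\bigcap_{k=1}^n O_k=B_X\setminus\bigcup_{j=1}^{m_n}B(z_j,r_j)$. As this set is non-empty we must have $r_j<1+\Vert z_j\Vert$ for each $j$, so we may pick $\varepsilon_n\in(0,1/n)$ small enough that $(1-\varepsilon_n)(1+\Vert z_j\Vert)>r_j$ for every $j$ and small relative to the gaps $1+\Vert z_j\Vert-r_j$, for a reason that will be clear below. I would then apply Theorem~\ref{theo:sropenarrow} with $E_n:=\spann\{z_1,\dots,z_{m_n}\}$, the point $y$, the relatively weakly open set $W_n:=\{z\in B_X:\vert g_k(z)-g_k(y)\vert<1/n,\ 1\leq k\leq n\}$ (which contains $y$), and $\varepsilon_n$. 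This yields $x_n\in W_n$ with $\Vert e+\lambda x_n\Vert>(1-\varepsilon_n)(\Vert e\Vert+\vert\lambda\vert)$ for all $e\in E_n$ and $\lambda\in\mathbb R$, and with $\Vert T(x_n-y)\Vert<\varepsilon_n$. Taking $e=-z_j$ and $\lambda=1$ gives, exactly as in the proof of Lemma~\ref{lemma:daugacenterbxopen}, $\Vert x_n-z_j\Vert>(1-\varepsilon_n)(1+\Vert z_j\Vert)>r_j$, so $x_n\in\bigcap_{k=1}^n O_k$; taking $e=0$ gives $\Vert x_n\Vert>1-\varepsilon_n$. Replacing $x_n$ by $x_n/\Vert x_n\Vert$ (a perturbation of size $<\varepsilon_n$) we may assume $x_n\in S_X$ while retaining, for $\varepsilon_n$ small, the membership $x_n\in\bigcap_{k=1}^n O_k$, the estimates $\vert g_k(x_n)-g_k(y)\vert\to 0$ for each fixed $k$, and $\Vert T(x_n-y)\Vert\to 0$.

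Now $\{x_n\}\subseteq S_X$ satisfies $x_n\in\bigcap_{k=1}^n O_k$, so Lemma~\ref{lema:gklemabxabier} furnishes a subsequence $\{x_{\sigma(n)}\}$ any of whose $w^*$-cluster points $u\in B_{X^{**}}$ satisfies $\Vert x+u\Vert=1+\Vert x\Vert$ for every $x\in X$. Fix such a $u$ (it exists by $w^*$-compactness of $B_{X^{**}}$): this gives \eqref{ortonarrowsep}, and taking $x=0$ shows $u\in S_{X^{**}}$. For \eqref{iguanarrowsep}, observe that for every $f\in Y^*$ the number $T^{**}(u)(f)=u(T^*f)$ is a $w^*$-cluster value of the scalar sequence $\{(T^*f)(x_{\sigma(n)})\}=\{f(T(x_{\sigma(n)}))\}$, which converges to $f(T(y))$ because $\Vert T(x_{\sigma(n)}-y)\Vert\to 0$; hence $T^{**}(u)(f)=f(T(y))$ for all $f$, i.e. $T^{**}(u)=T(y)$. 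The same cluster-value argument applied to each fixed $g_k$, together with $\vert g_k(x_{\sigma(n)})-g_k(y)\vert\to 0$, yields $u(g_k)=g_k(y)$, which is the third assertion.

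Finally, for the ``in particular'' clause, let $W$ be a non-empty relatively $w^*$-open subset of $B_{X^{**}}$, determined (after shrinking) by finitely many functionals $h_1,\dots,h_m\in X^*$. By Goldstine's theorem $B_X$ is $w^*$-dense in $B_{X^{**}}$, so I may pick $y\in B_X\cap W$, and running the argument above with this $y$ and with $\{h_1,\dots,h_m\}$ adjoined to the prescribed countable family produces $u$ with $u(h_i)=h_i(y)$ for $i\leq m$; thus $u$ and $y$ satisfy the same defining inequalities of $W$, whence $u\in W$, while $u$ also satisfies \eqref{ortonarrowsep} and \eqref{iguanarrowsep}. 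The main obstacle is orchestrating a single invocation of Theorem~\ref{theo:sropenarrow} so that its octahedrality conclusion simultaneously forces $x_n$ into the prescribed $b_X$-open set $\bigcap_{k=1}^n O_k$ (via the ball computation of Lemma~\ref{lemma:daugacenterbxopen}) while the weakly-open constraint $W_n$ and the narrowness estimate pin down the values on the $g_k$ and the image under $T$; the renormalisation onto $S_X$ needed to feed Lemma~\ref{lema:gklemabxabier} is a minor technical point requiring only that $\varepsilon_n$ be chosen small relative to the gaps $1+\Vert z_j\Vert-r_j$.
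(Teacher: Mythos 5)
Your proposal is correct and follows essentially the same route as the paper's proof: apply Theorem~\ref{theo:sropenarrow} to produce $x_n$ lying in $\bigcap_{k=1}^n O_k$ (via the ball computation of Lemma~\ref{lemma:daugacenterbxopen}) and in the weakly open set controlling the $g_k$-values, with $\Vert T(x_n-y)\Vert$ small, then feed the sequence into Lemma~\ref{lema:gklemabxabier} and take a $w^*$-cluster point. In fact you supply details the paper leaves implicit --- the non-emptiness of the finite intersections via octahedrality, the normalisation onto $S_X$ needed for Lemma~\ref{lema:gklemabxabier}, and the Goldstine argument for the ``in particular'' clause --- so the write-up is, if anything, more complete than the original.
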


\begin{proof}
Pick $\{O_n\}_{n\in\mathbb N}$ a basis of the $b_X$-topology for $B_X$. By Theorem \ref{theo:sropenarrow} we can find, for every $n\in\mathbb N$, an element $x_n\in \{z\in B_X: \vert g_n(z-y)\vert<\frac{1}{n}:  1\leq i\leq n\}\cap \bigcap\limits_{k=1}^n O_k$ such that $\Vert T(x_n-y)\Vert<\frac{1}{n}$. Up passing to a subsequence, by Lemma \ref{lema:gklemabxabier}, we can assume that every cluster point $u$ of $\{x_n\}$ in the $w^*$-topology of $B_{X^{**}}$ satisfies that
$$\Vert x+u\Vert=1+\Vert x\Vert$$
holds for every $x\in X$. Take one such cluster point $u$. From the condition on the sequence it is clear that $u(g_n)=g_n(y)$ holds for every $n\in\mathbb N$. Let us prove that $T^{**}(u)=T(y)$. To this end, take a subnet $\{x_s\}$ of $\{x_n\}$ such that $x_s\rightarrow u$ in the $w^*$-topology of $B_{X^{**}}$. Then, the $w^*-w^*$ continuity of $T^{**}$ implies that
$$T(x_s)=T^{**}(x_s)\rightarrow T^{**}(u),$$
where the last convergence is in the $w^*$-topology of $Y^{**}$. Hence $T^{**}(x_s)-T(y)\rightarrow T(u)-T(y)$ in the $w^*$-topology. The $w^*$-lower semicontinuity of the norm of $Y^{**}$ implies that
$$\Vert T^{**}(u)-T(y)\Vert\leq \liminf \Vert T(x_s)-T(y)\Vert=0,$$
from where $T^{**}(u)=T(y)$, and we are done.
\end{proof}

We do not know whether the previous Theorem holds for $\dens(X)=\omega_1$. Note that, in order to use an inductive argument similar to the one of \cite[Theorem 3.3]{loru}, we would need to guarantee that Theorem \ref{theo:sepasrLornarrow} holds if we take $y\in B_{X^{**}}$. Let us exhibit, however, a class of narrow operators where Theorem \ref{theo:sepasrLornarrow} extends to the non-separable case.

\begin{theorem}\label{theo:nosepLordualsep}
Let $X$ be a Banach space with the Daugavet property and $Y$ be any Banach space. Let $T:X\longrightarrow Y$ be a bounded operator such that $T^*(Y^*)$ is separable. Then, given $\{g_n:n\in\mathbb N\}\subseteq S_{X^*}$ and any $u\in B_{X^{**}}$ we can find $v\in S_{X^{**}}$ such that
\begin{enumerate}
\item\label{ortonarrownonsep} $\Vert x+u\Vert=1+\Vert x\Vert$ holds for every $x\in X$.
\item\label{iguanarrownonsep} $T^{**}(v)=T^{**}(u)$.
\item $v(g_n)=u(g_n)$ holds for every $n\in\mathbb N$.
\end{enumerate}
In particular, given any non-empty $w^*$-open subset $W$ of $B_{X^{**}}$ there exists $u\in W$ satisfying \eqref{ortonarrownonsep} and \eqref{iguanarrownonsep}.
\end{theorem}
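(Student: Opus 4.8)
The plan is first to collapse condition \eqref{iguanarrownonsep} and the requirement $v(g_n)=u(g_n)$ into a single separable constraint, and then to run the transfinite scheme of Theorem~\ref{theo:daucenterome1}, this time exhausting the domain $X$ (the space against which $L$-orthogonality is sought) rather than the codomain. Observe that $T^{**}(v)=T^{**}(u)$ holds if and only if $\langle v,T^*(y^*)\rangle=\langle u,T^*(y^*)\rangle$ for every $y^*\in Y^*$, i.e. if and only if $v$ and $u$ agree on $T^*(Y^*)$. Since $T^*(Y^*)$ is separable, I would fix a countable dense set $\{h_k\}\subseteq T^*(Y^*)$ and set $V:=\overline{\spann}\bigl(\{g_n\}\cup\{h_k\}\bigr)$, a separable subspace of $X^*$. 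Then condition \eqref{iguanarrownonsep} together with $v(g_n)=u(g_n)$ becomes the single requirement $\restr{v}{V}=\restr{u}{V}$, and the statement reduces to producing $v\in S_{X^{**}}$ with $\restr{v}{V}=\restr{u}{V}$ and $\Vert x+v\Vert=1+\Vert x\Vert$ for all $x\in X$; evaluating the latter at $x=0$ already forces $\Vert v\Vert=1$, so $v\in S_{X^{**}}$ is automatic.

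The basic step would be a separable-ideal lemma. Suppose $Z$ is a separable almost isometric ideal in $X$ carrying an almost-isometric Hahn--Banach extension operator $\varphi\colon Z^*\to X^*$ whose range contains a prescribed countable set $\Lambda\subseteq X^*$, and let $w_0\in B_{X^{**}}$. First I would note that $Z$ inherits the Daugavet property: given a slice of $B_Z$ and a point $z_0\in S_Z$, one transports a witnessing slice of $B_X$ back into $Z$ through the almost-isometric operator of Theorem~\ref{theo:hbaioper} (item (3) there keeps the transported point inside the slice), so $I_Z$ is a Daugavet center. Applying Theorem~\ref{theo:daugacensep} to $I_Z$ with base point $\varphi^*(w_0)\in B_{Z^{**}}$ and with a countable subset of $S_{Z^*}$ whose closed linear span contains $\varphi^{-1}(\Lambda)$ produces $\omega\in S_{Z^{**}}$ that is $L$-orthogonal to $Z$ and agrees with $\varphi^*(w_0)$ on $\varphi^{-1}(\Lambda)$. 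Writing $i\colon Z\hookrightarrow X$ and setting $v:=i^{**}(\omega)$, one obtains $v\in S_{X^{**}}$ which is $L$-orthogonal to $Z$, because $i^{**}$ is isometric and $z+v=i^{**}(z+\omega)$ for $z\in Z$, and which agrees with $w_0$ on $\Lambda$, because $i^*\varphi=\mathrm{id}_{Z^*}$ and $\Lambda\subseteq\varphi(Z^*)$.

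With this block I would reproduce the induction of Theorem~\ref{theo:daucenterome1} almost verbatim. Fixing a dense set $\{x_\alpha\}_{\alpha<\dens(X)}$ of $S_X$, I construct a chain $(Z_\alpha,\varphi_\alpha,\{f_{\alpha,\beta}:\beta<\alpha\},v_\alpha)$ in which each $Z_\alpha$ is a separable almost isometric ideal containing $\bigcup_{\beta<\alpha}Z_\beta\cup\{x_\alpha\}$, chosen via Theorem~\ref{theo:exteaiideales} so that $\varphi_\alpha(Z_\alpha^*)$ contains $V\cup\{f_{\beta,\gamma}:\gamma<\beta<\alpha\}$. At stage $\alpha$ the base point is a $w^*$-cluster point $\bar v$ of $\{v_\beta\}_{\beta<\alpha}$, which by induction agrees with $u$ on $V$ and with each $v_\beta$ on $\{f_{\beta,\gamma}\}$; the block then yields $v_\alpha\in S_{X^{**}}$ that is $L$-orthogonal to $Z_\alpha$ and agrees with $\bar v$ on $V\cup\{f_{\beta,\gamma}:\gamma<\beta<\alpha\}$, after which I pick $\{f_{\alpha,\beta}:\beta<\alpha\}\subseteq S_{X^*}$ norming for $Z_\alpha\oplus\mathbb R v_\alpha$. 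Taking finally a $w^*$-cluster point $v$ of $\{v_\alpha\}$, agreement on the fixed functionals of $V$ is preserved, so $\restr{v}{V}=\restr{u}{V}$ delivers \eqref{iguanarrownonsep} and $v(g_n)=u(g_n)$. For \eqref{ortonarrownonsep}, given $x\in S_X$ and $\varepsilon>0$ I approximate $x$ by some $x_\alpha$, use that $v_\alpha$ is $L$-orthogonal to $Z_\alpha\ni x_\alpha$ together with the norming set to find $\beta$ with $(x_\alpha+v_\alpha)(f_{\alpha,\beta})>2-\varepsilon$, and observe that the carrying conditions force $v(f_{\alpha,\beta})=v_\alpha(f_{\alpha,\beta})$; hence $\Vert x+v\Vert>2-\varepsilon$, and letting $\varepsilon\to0$ gives $\Vert x+v\Vert=1+\Vert x\Vert$.

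The heart of the matter, and the step I expect to be the real obstacle, is the bookkeeping of functionals at limit stages. The $L$-orthogonality of the final cluster point to all of $X$ is read off from the norming sets $\{f_{\alpha,\beta}\}$, and for these to be transmitted to every later stage they must lie in $\varphi_\delta(Z_\delta^*)$; a separable ideal $Z_\delta$ can absorb them only while the accumulated collection $V\cup\{f_{\beta,\gamma}:\gamma<\beta<\delta\}$ stays countable, as it does at every stage $\delta<\omega_1$. Here the separability of $T^*(Y^*)$ is decisive: it lets condition \eqref{iguanarrownonsep} be encoded once and for all in the single fixed separable space $V$, so that $\restr{v_\alpha}{V}=\restr{u}{V}$ is maintained at no extra cost throughout the construction and survives to the cluster point, a reduction that is unavailable for an operator whose adjoint has non-separable range.
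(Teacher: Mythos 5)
Your proposal is correct, but it takes a genuinely different route from the paper. After the same initial reduction---encoding $T^{**}(v)=T^{**}(u)$ as agreement of $v$ with $u$ on a countable dense subset of $T^*(B_{Y^*})$, hence on the fixed separable subspace $V$---the paper's proof is essentially two lines: it invokes the external result \cite[Theorem 3.3]{loru} as a black box, which already produces, in a Daugavet space of density $\omega_1$, an element of $S_{X^{**}}$ that is $L$-orthogonal to $X$ and agrees with a prescribed $u\in B_{X^{**}}$ on countably many prescribed functionals. You instead re-prove that external theorem inside the paper's own machinery: your building block (separable almost isometric ideal $Z$ absorbing the functionals via Theorem \ref{theo:exteaiideales}, Theorem \ref{theo:daugacensep} applied to $I_Z$ with base point $\varphi^*(w_0)$, push-forward by $i^{**}$ using $i^*\varphi=\mathrm{id}_{Z^*}$ and the norm-one projection $\varphi^*$) is sound, and your transfinite bookkeeping mirrors Theorem \ref{theo:daucenterome1} correctly, including the carrying of the norming functionals $f_{\alpha,\beta}$ to all later stages. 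Two remarks. First, your route needs the additional fact that a separable ai-ideal in a space with the Daugavet property again has the Daugavet property; your slice-transport sketch via item (3) of Theorem \ref{theo:hbaioper} is the standard argument and the fact is known (it is essentially in \cite{aln2}), but the paper's citation of \cite{loru} sidesteps having to establish it. Second, your construction closes only while the accumulated functional sets stay countable, i.e.\ for $\dens(X)\leq\omega_1$; this is not a defect of your argument but a feature of the theorem: although the printed statement omits the density hypothesis, the abstract and introduction assert it for $\dens(X)=\omega_1$, the quoted \cite[Theorem 3.3]{loru} carries the same restriction, and Example \ref{exam:counnarrow} (the zero operator, whose adjoint has separable range $\{0\}$) shows the statement fails for $\dens(X)=\omega_2$ under the continuum hypothesis, so your implicit restriction matches the only possible scope. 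What your longer argument buys is a self-contained proof that makes transparent exactly where the separability of $T^*(Y^*)$ enters (it is folded once and for all into the fixed constraint space $V$); what the paper's buys is brevity.
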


\begin{remark}\label{remark:adjuntosepnarrow}
It is known that, if $X$ is a Banach space with the Daugavet property and $Y$ be any Banach space, then any bounded operator $T:X\longrightarrow Y$ such that $T^*(Y^*)$ is separable satisfies that $T$ is narrow.

Indeed, if $A$ is any closed, bounded and convex subset of $B_X$, then $T(A)$ is \textit{slicely countably determited} by a sequence of slices $S(T(A),y_n^*,\delta_n)$, where $\{y_n^*\}\subseteq S_{Y^*}$ satisfies that $T^*(y_n^*)$ is dense in $T^*(S_{Y^*})$ and $\delta_n$ is a null sequence of positive scalars (see \cite{akmms} for formal definition). This implies, in the languaje of \cite{akmms}, that $T$ is \textit{hereditary slicely countably determined} and so, by \cite[Theorem 5.11]{akmms}, $T$ is narrow. We thank Miguel Mart\'in for pointing out this remark.
\end{remark}

\begin{proof}[Proof of Theorem \ref{theo:nosepLordualsep}]
Take $\{y_n^*\}\subseteq B_{Y^*}$ be such that $\{T^*(y_n^*)\}$ is dense in $T^*(B_{Y^*})$. Now we can apply \cite[Theorem 3.3]{loru} to find $v\in S_{X^{**}}$ satisfying that $u=v$ on $\{g_n:n\in\mathbb N\}\cup\{T^*(y_n^*): n\in\mathbb N\}$ and such that
$$\Vert x+v\Vert=1+\Vert x\Vert$$
holds for every $x\in X$. In only remains to prove that $T^{**}(v)=T^{**}(v)$. To this end notice that, since $\{T^*(y_n^*)\}$ is dense in $T^*(B_{Y^*})$, a density argument implies that $v=u$ on $T^*(B_{Y^*})$. Hence
$$T^{**}(v)(y^*)=v(T^*(y^*))=u(T^*(y^*))=T^{**}(u)(y^*)$$
holds for every $y^*\in B_{Y^*}$. From here it is inmediate to get that $T^{**}(v)=T^{**}(u)$, and the proof is complete.
\end{proof}

Note that for $\dens(X)>\omega_1$ the result is no longer true.

\begin{example}\label{exam:counnarrow}
Let $X=\ell_2(\mathcal P(\mathbb R))\iten C([0,1])$, $Y$ be any Banach space and $T:X\longrightarrow Y$ be the zero operator. Then $T$ is narrow since $X$ has the Daugavet property. However there is no $u\in S_{X^{**}}$ such that $\Vert x+u\Vert=1+\Vert x\Vert$ holds for every $x\in X$, so $T$ can not satisfy the thesis of Theorem \ref{theo:sepasrLornarrow}.

Note that, under continuum hypothesis, $\dens(\ell_2(\mathcal P(\mathbb R)))=card(\mathcal P(\mathbb R))=\omega_2$, and so the result is sharp.
\end{example}

\textbf{Acknowledgements:} The author thanks V. Kadets and M. Mart\'in for fruitful conversations on the topic of the paper.

\end{document}